\numberwithin{equation}{section}
\newtheorem{theorem}{Theorem}
\newtheorem{lemma}[theorem]{Lemma}
\newtheorem{proposition}[theorem]{Proposition}
\newtheorem{conjecture}[theorem]{Conjecture}
\newtheorem{corollary}[theorem]{Corollary}
\newtheorem{question}[theorem]{Question}
\theoremstyle{definition}
\newtheorem{example}[theorem]{Example}
\newtheorem{remark}[theorem]{Remark}
\DeclareMathAlphabet{\mathcalligra}{T1}{calligra}{m}{n}
\title [Supersolvable line arrangements]
{Real and complex supersolvable line arrangements in the projective plane}
\author[Krishna Hanumanthu]{Krishna Hanumanthu}
\address{Chennai Mathematical Institute, H1 SIPCOT IT Park, Siruseri, Kelambakkam 603103, India}
\email{krishna@cmi.ac.in}
\author[Brian Harbourne]{Brian Harbourne}
\address{Department of Mathematics, University of Nebraska-Lincoln, Lincoln, NE 68588, USA}
\email{brianharbourne@unl.edu}
\thanks{The first author was partially supported by a grant from Infosys
        Foundation and by DST SERB MATRICS grant MTR/2017/000243.
        The second author was partially supported by Simons Foundation grant \#524858.
        We also thank \c{S}.\ Toh\v{a}neanu for some helpful comments.}
\begin{document}

\begin{abstract}
We study supersolvable line arrangements in ${\mathbb P}^2$ over the reals and over the complex numbers,
as the first step toward a combinatorial classification. Our main results show that
a nontrivial (i.e., not a pencil or near pencil) complex line arrangement cannot have more than 4 modular points, and 
if all of the crossing points of a complex line arrangement have multiplicity 3 or 4, then 
the arrangement must have 0 modular points
(i.e., it cannot be supersolvable). 
This provides at least a little evidence for
our conjecture that every nontrivial complex
supersolvable line arrangement has at least one point of multiplicity 2,
which in turn is a step toward the much stronger conjecture of 
Anzis and Toh\v{a}neanu that every nontrivial complex 
supersolvable line arrangement with $s$ lines has at least $s/2$ points of multiplicity 2.
\end{abstract}

\date{July 15, 2019}
\maketitle


\section{Introduction}
Line arrangements have provided useful insight in studying a range of recent problems in algebraic geometry.
They have played a fundamental role in studying the containment problem (see \cite{DST, DHNSST}),
for the bounded negativity problem and H-constants \cite{BDHHPS},
and for unexpected curves \cite{CHMN, DMO}. The supersolvable arrangements
are a particularly tractable subclass of line arrangements which have played a role
in the study of unexpected curves \cite{CHMN, DMO}. Understanding them better
should make them even more useful. Thus the goal of the present paper is to
pin down as much as currently possible properties of real and complex 
supersolvable line arrangements.

A line arrangement is simply a finite set of $s>1$ distinct lines ${\mathcal L}=\{L_1,\dots, L_s\}$ in the projective plane.
A {\em modular point} for ${\mathcal L}$ is a {\em crossing point} $p$ (i.e., a point where two (or more) of the lines meet), 
with the additional property
that whenever $q$ is any other crossing point, then the line through $p$ and $q$ is $L_i$ 
for some $i$. Then we say ${\mathcal L}$ is {\em supersolvable} if it has a modular point (see Figure \ref{FigSS}).

If the $s$ lines of ${\mathcal L}$ are concurrent (i.e., all meet at a point), then ${\mathcal L}$ is supersolvable.
Such an arrangement is called a {\em pencil}. If ${\mathcal L}$ consists of $s$ lines, exactly 
$s-1$ of which are concurrent, it is called a {\em near pencil}; near pencils are also supersolvable, since every 
crossing point for a near pencil is modular. Removing any line,
different from the line through two white dots, from the arrangement shown in Figure \ref{FigSS}
results in a near pencil.

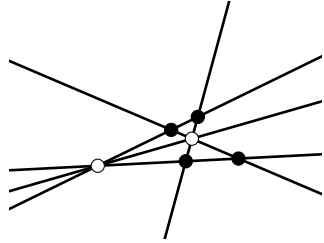
\begin{figure}
\begin{tikzpicture}[line cap=round,line join=round,x=0.5cm,y=0.5cm]
\clip(-4.3,0) rectangle (4,6.3);
\draw [line width=1.pt,domain=-4.3:7.3] plot(\x,{(-7.6824-1.3*\x)/-2.66});
\draw [line width=1.pt,domain=-4.3:7.3] plot(\x,{(-0.1872-1.18*\x)/-0.32});
\draw [line width=1.pt,domain=-4.3:7.3] plot(\x,{(--4.7724--0.12*\x)/2.34});
\draw [line width=1.pt,domain=-4.3:7.3] plot(\x,{(-5.186588325271517--0.7625757947984444*\x)/-1.7885020370240152});
\draw [line width=1.pt,domain=-4.3:7.3] plot(\x,{(-6.252138214070486-0.7200313033000598*\x)/-2.5027203534373044});
\begin{scriptsize}
\draw [fill=black] (0.72,3.24) circle (2.5pt);
\draw [color=black,fill=white] (-1.94,1.94) circle (2.5pt);
\draw [fill=black] (0.4,2.06) circle (2.5pt);
\draw [fill=black] (1.80144262295082,2.1318688524590166) circle (2.5pt);
\draw [fill=black] (0.012940585926804804,2.894444647257461) circle (2.5pt);
\draw [color=black, fill=white] (0.5627203534373044,2.6600313033000598) circle (2.5pt);
\end{scriptsize}
\end{tikzpicture}
\caption{A supersolvable line arrangement with 2 modular points (shown as white dots).}
\label{FigSS}
\end{figure}

We refer to the number of lines of an arrangement containing a point as the multiplicity of the point.
So crossing points always have multiplicity at least 2. The modular points in Figure \ref{FigSS}
have multiplicity 3, while the other crossing points in the figure have multiplicity 2.
For $k\geq2$, we will use $t_k$ to denote the number of crossing points of multiplicity $k$.

For example, a pencil of $s$ lines has a unique modular point (indeed, a unique crossing point)
and it has multiplicity $s$, so $t_s=1$ and otherwise $t_k=0$. A near pencil of $s$ lines has $s$ modular points;
when $s > 3$, $s-1$ of the $s$ modular points have multiplicity 2 (so
$t_2=s-1$) and one has multiplicity $s-1$ (so $t_{s-1}=1$), while
if $s=3$ all the three modular points have multiplicity 2 (so $t_2 =
3$).  We will refer to
pencils and near pencils as trivial.

It is an open problem to determine which $t$ vectors $(t_2,\dots,t_s)$ can arise for real or complex
line arrangements, even for supersolvable line arrangements. 
It is even an open problem to classify all complex line arrangements with $t_2=0$, even for
the supersolvable case.
It is known that no nontrivial real line arrangement can have $t_2=0$. 
Three nontrivial kinds of complex line 
arrangements are known with $t_2=0$ but there is no proof that there are no others.
No nontrivial supersolvable complex
line arrangements are known with $t_2=0$, but again no proof is known that there are none
even though it is expected that $t_2$ is large. (In \cite{AT} it is conjectured that a nontrivial complex
supersolvable arrangement of $s$ lines has $t_2\geq s/2$.)

We will address these questions for real and for complex supersolvable line arrangements.
Our main results are Theorem \ref{BndsThm}, which shows that
a nontrivial complex line arrangement cannot have more than 4 modular points, and 
Theorem \ref{m=3,4}, which shows that
if all of the crossing points of a complex line arrangement have multiplicity 3 or 4, then 
the arrangement must have 0 modular points
(i.e., it cannot be supersolvable). This provides at least a little evidence for our 
Conjecture \ref{conj1} that every nontrivial complex supersolvable line arrangement
has at least one crossing point of multiplicity 2, and also supports Conjecture
\ref{conj2} \cite{AT} that there are in fact at least $s/2$ crossing points of multiplicity 2,
where $s$ is the number of lines in the arrangement.

The structure of the paper is as follows. In Section \ref{prelims} we recall
facts we will use later. In Section \ref{classification} we study the classification of
supersolvable real and complex line arrangements, and prove Theorem \ref{BndsThm}.
In Section \ref{Conjs} we consider various conjectures related to the occurrence
of points of multiplicity 2 on real and complex line arrangements
(such as Conjectures \ref{conj1} and \ref{conj2}), and we prove
Theorem  \ref{m=3,4}. Finally, in Section \ref{appls}, we discuss the application
of supersolvable line arrangements to the occurrence of unexpected plane curves,
and raise the question of whether all which can occur are already known.

\section{Preliminaries}\label{prelims}
Let ${\mathcal L}=\{L_1,\dots, L_s\}$ be a line arrangement in the
projective plane over an arbitrary field $K$. In this section we include some
well-known results that we use in this paper. 

Recall that, for every $k \ge 2$, $t_k$ denotes the number of crossing
points of multiplicity $k$. Let $n$ denote the number of all crossing
points. Let $m$ be the largest integer $k$ such that $t_k > 0$. 

First we have the following combinatorial identity which holds for any
field $K$. 
\begin{equation}\label{comb-id}
\binom{s}{2} = \sum_{k\geq2}\binom{k}{2}t_k.
\end{equation}

If $K = \mathbb{C}$ and $\mathcal{L}$ is nontrivial, we have the following
inequality due to Hirzebruch \cite{Hir83}. 

\begin{equation}\label{hirzebruch}
t_2+\frac{3}{4}t_3\geq s + \sum_{k>4}t_k(k-4).
\end{equation}

If $K = \mathbb{R}$ and $\mathcal{L}$ is not a pencil, we have the following inequality due to Melchior \cite{M}.

\begin{equation}\label{melchior}
t_2 \geq 3 +\sum_{k\geq3}(k-3)t_k.
\end{equation}

When $\operatorname{char}(K)=0$ and $\mathcal{L}$ is supersolvable, 
we have the following inequality proved in
\cite[Proposition 3.1]{AT}.
\begin{equation}\label{at}
t_2\geq 2n-m(s-m)-2. 
\end{equation}

The following result is \cite[Lemma 2.1]{T}. For the reader's convenience we include a proof.

\begin{lemma}\label{TohLem}
Let $\mathcal{L}$ be a supersolvable line arrangement (over any field $K$) with a modular point $p$ of multiplicity $m$.
If $q$ is a crossing point of multiplicity $n\geq m$, then $q$ is also modular.
\end{lemma}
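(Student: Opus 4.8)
The plan is to exploit the pencil structure at the modular point $p$ to pin down every line of $\mathcal L$, and then to use the hypothesis $n\ge m$ as a counting constraint forcing the lines through $q$ to exhaust all crossing points. Since the statement holds over an arbitrary field, I will rely only on elementary incidence facts together with the definition of a modular point, not on the analytic inequalities \eqref{hirzebruch}--\eqref{at}.

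First I would record the structure at $p$. Write $M_1,\dots,M_m$ for the $m$ lines through $p$. If $r\neq p$ is any crossing point, then modularity of $p$ makes the line $\overline{pr}$ a member of $\mathcal L$, necessarily some $M_i$; hence every crossing point other than $p$ lies on exactly one $M_i$, since two of the $M_i$ meet only at $p$. Consequently a line $L$ not passing through $p$ meets $M_1,\dots,M_m$ in $m$ distinct crossing points, and these are \emph{all} of its crossing points: any crossing point on $L$ equals $M_i\cap L$ for some $i$. Thus every line avoiding $p$ carries exactly $m$ crossing points. Since $\overline{pq}\in\mathcal L$, the point $q$ lies on one such line, say $q\in M_1$, and the remaining $n-1$ lines through $q$ avoid $p$.

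Next comes the key counting step, which I expect to be the crux. Choose a line $L\in\mathcal L$ passing through neither $p$ nor $q$. (The degenerate case in which no such line exists makes $q$ modular directly: then every line of $\mathcal L$ passes through $p$ or through $q$, and for any crossing point $w\neq q$ a defining line through $q$ gives $\overline{qw}\in\mathcal L$, while if neither defining line meets $q$ then both pass through $p$, forcing $w=p\in M_1=\overline{qp}$; I will dispose of this first.) The $n$ lines through $q$ meet $L$ in $n$ points, and these are distinct, for a coincidence would place two distinct lines through $q$ on a common point of $L$, forcing that point to be $q\in L$, which is excluded. Each meeting point is a crossing point lying on $L$, so $L$ has at least $n$ crossing points. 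But by the previous paragraph $L$ has exactly $m$, whence $m\ge n$; combined with $n\ge m$ this yields $n=m$ and, more importantly, shows that the crossing points of $L$ are \emph{precisely} its intersections with the $n$ lines through $q$.

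Finally I would verify that every crossing point $w\neq q$ is joined to $q$ by a line of $\mathcal L$, which is exactly modularity of $q$. If $w$ lies on $M_1$ (in particular if $w=p$) then $\overline{qw}=M_1\in\mathcal L$. Otherwise $w$ lies on some $M_i$ with $i\ge 2$, and since $w$ is a crossing point there is a second line $L'$ through $w$ with $L'\neq M_i$; as $\overline{pw}=M_i$, this $L'$ avoids $p$. If $L'$ passes through $q$ we are done. If not, $L'$ avoids both $p$ and $q$, so the counting step applies to $L'$: its crossing point $w$ must be one of its intersections with the $n$ lines through $q$, and that line is then $\overline{qw}\in\mathcal L$. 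The only real subtlety I anticipate is making the distinctness-and-exhaustion argument of the counting step fully rigorous; everything else is bookkeeping with the pencil through $p$.
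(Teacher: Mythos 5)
Your proof is correct, and it is organized genuinely differently from the paper's, even though both rest on the same underlying pigeonhole count. The paper checks modularity of $q$ pair-by-pair: for arbitrary lines $A,B\in\mathcal L$ not through $q$, it marks the distinct points where $A$ meets the lines through $q$, assigns each such point to the line through $p$ containing it (using modularity of $p$), and splits into the cases $n>m$ --- where pigeonhole forces $A$ and $B$ to pass through $p$, so $A\cap B=p$ lies on the line $\overline{pq}$ --- and $n=m$, where the assignment is a bijection and $A\cap B$ is forced to coincide with one of the marked points, hence to lie on a line through $q$. You instead isolate a clean structural fact (a line avoiding the modular point $p$ has exactly $m$ crossing points, namely its intersections with the pencil through $p$), split on whether some line of $\mathcal L$ avoids both $p$ and $q$, and in the non-degenerate case deduce $n=m$ together with the exhaustion statement, after which modularity of $q$ is verified crossing point by crossing point rather than line-pair by line-pair. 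Your route buys a quotable, reusable lemma and a tidier logical shape; the paper's route keeps everything inside one computation and, in its $n>m$ branch, makes explicit that $\mathcal L$ then consists only of the lines through $p$ and $q$ --- which is precisely your degenerate case, and is what the paper later records as Corollary \ref{n>m Cor}. Your argument recovers that corollary implicitly (if $n>m$, no line can avoid both $p$ and $q$, since otherwise your counting step would give $n=m$), so nothing is lost in your reorganization.
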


\begin{proof}
In addition to the line $L=L_{p1}=L_{q1}$ through $p$ and $q$, $\mathcal{L}$ contains $m-1$ lines through $p$ 
(denote them by $L_{p2},\dots,L_{pm}$) and $n-1$ lines through $q$ (denote them by $L_{q2},\dots,L_{qn}$).
Let $r_{ij}$ be the point where $L_{pi}$ intersects $L_{qj}$.
Suppose $A$ and $B$ are any two distinct lines in $\mathcal{L}$.
Let $r$ be the point where $A$ and $B$ meet. We must show $r$ is on a line in $\mathcal{L}$ through $q$.
If either $A$ or $B$ contain $q$, then $r$ is on a line in $\mathcal{L}$ through $q$,
so assume neither $A$ nor $B$ contains $q$.

First say $n>m$. Let $a_j$ be the point where $A$ and $L_{qj}$ meet. Since $q\neq a_j$, we get
$n-1$ distinct points $a_j$, each of which is on some line $L_{pi_j}$ since $p$ is modular.
But there are only $m-1<n-1$ lines $L_{pi}$, so we must have $i_{j}=i_{j'}$ for some $j\neq j'$,
and hence $A=L_{pi_j}=L_{pi_{j'}}$, so $p\in A$. Likewise $p\in B$, so $r=p\in L$ 
is on a line in $\mathcal{L}$ through $q$. (This also shows that $n>m$ implies that
every line in $\mathcal{L}$ contains either $p$ or $q$; i.e., the lines in $\mathcal{L}$
are the $m+n-1$ lines through $p$ and $q$.)

Now say $n=m$. If both $A$ and $B$ contain $p$, then $r=p\in L$ is on a line in $\mathcal{L}$ through $q$.
So assume either $A$ or $B$ does not contain $p$; say $p\not\in A$.
But $p$ is modular, so the point $r$ where $A$ and $B$ meet is on $L_{pi'}$ for some $i'$.
Again, let $a_j$ be the point where $A$ and $L_{qj}$ meet. Since $q\neq a_j$, we get
$n-1$ distinct points $a_j$, each of which is on some line $L_{pi_j}$ since $p$ is modular.
If $i_j=i_{j'}$ for some $j\neq j'$, then $A=L_{pi_j}=L_{pi_{j'}}$, so $p\in A$ contrary to assumption.
Hence $i_j\neq i_{j'}$ whenever $j\neq j'$, the $n-1=m-1$ values of $j>1$ map under $j\mapsto i_j$ to
all $m-1=n-1$ values of $i>1$, hence for some $j'$ we have $i'=i_{j'}$, so $A$ meets $L_{pi'}$ at
$a_{j'}=r_{i_{j'}j'}=r_{i'j'}\in L_{pi'}$. But $A$ meets $L_{pi'}$ at $r\in L_{pi'}$, so $r=a_{j'}\in L_{qj'}$,
so $r$ is on a line in $\mathcal{L}$ through $q$. Thus $q$ is modular.
\end{proof}

\section{Classifying supersolvable line arrangements}\label{classification}

\subsection{Supersolvable line arrangements with modular points of multiplicity 2}
We first classify all line arrangements, over any field $K$, having one or more modular points of multiplicity 2,
or two (or more) modular points, not all of the same multiplicity.
Thus, after this section, we may assume all modular points have the same multiplicity, which is at least 3.

As a corollary of the proof of Lemma \ref{TohLem}, we have the following result, 
which classifies line arrangements where at least two multiplicities occur as multiplicities of modular points.

\begin{corollary}\label{n>m Cor}
Let $\mathcal{L}$ be a supersolvable line arrangement (over any field $K$) with a modular point $p$ of multiplicity $m$.
If $q$ is a crossing point of multiplicity $n>m$, then $\mathcal{L}$ consists exactly of the $m$ lines through $p$
and the $n$ lines through $q$ (hence $m+n-1$ lines altogether). 
If $m=2$, then every crossing point is modular and  $\mathcal{L}$ is a near pencil.
If $m>2$, then the only modular points are $p$ and $q$.
\end{corollary}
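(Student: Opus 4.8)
The plan is to extract everything from the structure already established in the proof of Lemma~\ref{TohLem}, since the corollary is explicitly advertised as following from that proof. Recall that in the case $n>m$, the parenthetical remark at the end of the first paragraph of that proof already shows that \emph{every} line in $\mathcal{L}$ contains either $p$ or $q$, so $\mathcal{L}$ consists precisely of the $m$ lines through $p$ together with the $n$ lines through $q$. The only line counted twice is the line $L$ joining $p$ and $q$, so the total is $m+(n-1)=m+n-1$ lines, giving the first assertion immediately.

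For the case $m=2$, I would argue that once $\mathcal{L}$ is known to consist of the $2$ lines through $p$ and the $n$ lines through $q$, the two lines through $p$ are $L$ (which passes through $q$) and one further line $L'$ not through $q$. Thus $n$ of the $n+1$ lines pass through the single point $q$, so exactly $s-1=n$ lines are concurrent at $q$, which is the definition of a near pencil. By Lemma~\ref{TohLem} (or by the elementary fact recalled in the introduction that every crossing point of a near pencil is modular), every crossing point is then modular.

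For the case $m>2$, the claim is that $p$ and $q$ are the only modular points. Here I would invoke Lemma~\ref{TohLem} in the contrapositive direction together with the combinatorial constraint that $\mathcal{L}$ is exactly the pencil of $m$ lines at $p$ union the pencil of $n$ lines at $q$. Suppose $r$ is a modular point distinct from $p$ and $q$. Since $r$ is a crossing point, it lies on at least two lines of $\mathcal{L}$; because every line passes through $p$ or $q$, the point $r$ is an intersection of a line through $p$ with a line through $q$ (it cannot be $p$ or $q$ themselves, and two distinct lines through the same center $p$ meet only at $p$). So $r$ has multiplicity exactly $2$, meeting one line $L_{pi}$ and one line $L_{qj}$. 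I would then check that $r$ cannot be modular by exhibiting two lines whose crossing point does not lie on any line through $r$: for instance, take a line $L_{pi'}$ through $p$ with $i'\neq i$ and a line $L_{qj'}$ through $q$ with $j'\neq j$; their intersection is a point whose only lines are $L_{pi'}$ and $L_{qj'}$, neither of which passes through $r$ (since $r$'s two lines are $L_{pi}$ and $L_{qj}$ and the pencils at $p,q$ are genuine pencils). This contradicts modularity of $r$, using $m>2$ and $n>m\ge3$ to guarantee that such indices $i'\neq i$ and $j'\neq j$ exist.

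The main obstacle I anticipate is the bookkeeping in the $m>2$ case: one must be careful that the two lines through a non-modular crossing point $r$ really are forced to be one line through $p$ and one through $q$ (ruling out, e.g., $r$ having higher multiplicity, which is impossible precisely because all lines pass through $p$ or $q$ and no third line can pass through $r$ without creating a third concurrency), and that the witnessing pair $L_{pi'},L_{qj'}$ genuinely avoids $r$. This is where the hypotheses $m>2$ and $n>m$ are used, so I would state the index count explicitly rather than leave it implicit.
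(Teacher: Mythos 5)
Your proposal follows essentially the same route as the paper: the decomposition of $\mathcal{L}$ into the $m+n-1$ lines through $p$ and $q$ is quoted from the parenthetical remark in the proof of Lemma~\ref{TohLem}, the $m=2$ case is read off as a near pencil, and the $m>2$ case is settled by showing that a multiplicity-$2$ crossing point $r$ fails to be modular because some other crossing point is joined to $r$ by no line of $\mathcal{L}$ (the paper phrases this as: a multiplicity-$2$ point on one line through $q$ is connected to at most one multiplicity-$2$ point on any other line through $q$).

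One step in your $m>2$ argument needs tightening, and it is exactly the spot you flagged. In the notation of Lemma~\ref{TohLem}, where $L=L_{p1}=L_{q1}$ is the line through both $p$ and $q$, your witness pair must satisfy not only $i'\neq i$ and $j'\neq j$ but also $i'\neq 1$ and $j'\neq 1$. If, say, $L_{pi'}=L$, then its intersection with $L_{qj'}$ is $q$ itself, which has multiplicity $n$ (not $2$) and lies on $L_{qj}$, a line through $r$; so that choice produces no contradiction, and your claim that the intersection point lies only on $L_{pi'}$ and $L_{qj'}$ is false for it. The repair is immediate: since $r\neq p,q$ forces $r\notin L$ and hence $i,j\geq 2$, and since $m\geq 3$ and $n\geq 4$ give at least two lines through $p$ other than $L$ and at least three through $q$ other than $L$, one can choose $i'\neq i$, $j'\neq j$ with $i',j'\geq 2$. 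For such a choice the point $r_{i'j'}=L_{pi'}\cap L_{qj'}$ really does have multiplicity $2$ and lies on neither $L_{pi}$ (that would force $r_{i'j'}=p$, impossible as $p\notin L_{qj'}$) nor $L_{qj}$ (symmetrically), completing the contradiction just as in the paper.
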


\begin{proof}
We saw in the proof of Lemma \ref{TohLem} that the lines in $\mathcal{L}$
are the $m+n-1$ lines through $p$ and $q$. If $m=2$, the only lines are the $m$ lines through
$q$ (one of which goes through $p$) and the remaining line through $p$, hence 
$\mathcal{L}$ is a near pencil. If $m>2$, there are $(n-1)(m-1)$ crossing points 
of multiplicity 2, but a point of multiplicity 2 on one line through $q$ is connected to at most
one point of multiplicity 2 on any other line through $q$, and hence no point of
multiplicity 2 is modular. I.e., the only modular crossing points are $p$ and $q$.
\end{proof}

\begin{proposition}
Let $\mathcal L$ be a line arrangement (over any field) having one or more modular points, 
exactly one of which has multiplicity 2 (call this point $p$). Then $\mathcal L$ is the pencil consisting of the
two lines through $p$.
\end{proposition}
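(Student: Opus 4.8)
The plan is to exploit the defining property of modularity for the special point $p$. Since $p$ has multiplicity $2$, exactly two lines of $\mathcal L$, say $L_1$ and $L_2$, pass through it, and these are the \emph{only} lines through $p$. Modularity of $p$ then says that for any other crossing point $q$ the line $\overline{pq}$ lies in $\mathcal L$; being a line through $p$, it must be $L_1$ or $L_2$, so $q\in L_1\cup L_2$. The first step, therefore, is to record the key collapse: \emph{every crossing point of $\mathcal L$ lies on $L_1\cup L_2$}. The goal is then to show this forces $\mathcal L=\{L_1,L_2\}$, and my strategy is to assume $s\ge 3$ and derive a contradiction with the hypothesis that $p$ is the \emph{unique} modular point of multiplicity $2$.

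I would split into two cases according to whether $\mathcal L$ has a crossing point of multiplicity greater than $2$. If such a point $q$ exists, then since $p$ is modular of multiplicity $m=2$ and $q$ has multiplicity $n>m$, Corollary \ref{n>m Cor} applies and shows that $\mathcal L$ is a near pencil in which \emph{every} crossing point is modular. As $s=m+n-1=n+1\ge 4$ here, such a near pencil has $s-1\ge 3$ crossing points of multiplicity $2$, all of them modular, contradicting the uniqueness of $p$.

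In the remaining case every crossing point has multiplicity exactly $2$, so no three lines of $\mathcal L$ are concurrent. Writing the lines other than $L_1,L_2$ as $M_1,\dots,M_k$ with $k=s-2\ge 1$, each $M_i$ meets $L_1$ in a point $a_i\ne p$ and $L_2$ in a point $b_i\ne p$, and the multiplicity-$2$ condition forces the $a_i$ (resp. the $b_i$) to be distinct. If $k\ge 2$, then two distinct lines $M_i,M_j$ must meet somewhere in $\mathbb P^2$, but their intersection is a crossing point lying on $L_1\cup L_2$, hence equals some $a_i=a_j$ or $b_i=b_j$, creating a point of multiplicity $3$, which is impossible. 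Thus $k\le 1$, and if $k=1$ the arrangement $\{L_1,L_2,M_1\}$ is a genuine triangle whose three vertices are all modular of multiplicity $2$, again contradicting uniqueness. Hence $k=0$ and $\mathcal L=\{L_1,L_2\}$, the pencil through $p$.

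The conceptual core, and the step I expect to carry the most weight, is the very first observation that modularity of a multiplicity-$2$ point collapses all crossing points onto just two lines; everything afterward is bookkeeping. The main thing to watch is that the uniqueness hypothesis is genuinely used in \emph{both} cases, since it is precisely what rules out the triangle (the surviving configuration when all multiplicities equal $2$) and the near pencils with $s\ge 4$ (the surviving configuration when a higher multiplicity occurs); these are exactly the two arrangements that pass the collapse, so the argument cannot be pushed through by pure combinatorics without invoking ``exactly one modular point of multiplicity $2$.''
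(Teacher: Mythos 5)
Your proof is correct, and it overlaps with the paper's in its first case but diverges in the second. The paper's entire argument runs: a crossing point of multiplicity $n>2$ would, by Corollary \ref{n>m Cor}, make $\mathcal L$ a near pencil with $n\geq 3$ modular points of multiplicity $2$ (contradicting uniqueness of $p$); hence all crossing points have multiplicity $2$, and then the paper asserts in one terse sentence that $\mathcal L$ has exactly one crossing point, the implicit justification being Lemma \ref{TohLem}: since $p$ is modular of multiplicity $m=2$, \emph{every} crossing point (having multiplicity $\geq 2=m$) is itself modular, so the uniqueness hypothesis leaves only $p$, forcing the pencil of two lines. Your first case is exactly the paper's first step. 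In the all-double-points case, however, you replace the appeal to Lemma \ref{TohLem} with a self-contained geometric argument: modularity of $p$ collapses every crossing point onto $L_1\cup L_2$, two additional lines would then meet in a point of multiplicity $3$ (impossible in this case), and a single additional line produces a triangle whose three vertices are all modular of multiplicity $2$, again contradicting uniqueness. Both routes use the uniqueness hypothesis in the same two places; what yours buys is explicitness --- it fills in the step the paper leaves to the reader and exhibits concretely the configurations (triangle, near pencil) that survive the collapse and must be excluded --- at the cost of being longer than the paper's lemma-driven shortcut.
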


\begin{proof} 
If $\mathcal L$ had a crossing point of multiplicity $n>m=2$, then by Corollary \ref{n>m Cor},
$\mathcal L$ is a near pencil, and thus would have $n$ points of multiplicity 2.
Thus $\mathcal L$ has exactly one crossing point, and it has multiplicity 2,
so $\mathcal L$ is the pencil consisting of the
two lines through $p$.
\end{proof}

\begin{proposition}
Let $\mathcal L$ be a line arrangement (over any field) having two or more modular points, 
at least two of which have multiplicity 2. Then $\mathcal L$ is a near pencil.
\end{proposition}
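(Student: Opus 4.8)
The plan is to extract from the two multiplicity-$2$ modular points a single point through which all but one of the lines pass. Let $p$ and $q$ be two distinct modular points of multiplicity $2$, and write $L_1,L_2$ for the two lines of $\mathcal L$ through $p$ and $M_1,M_2$ for the two lines through $q$. Since $q$ is a crossing point and $p$ is modular, the line $\overline{pq}$ lies in $\mathcal L$ and passes through $p$, so $\overline{pq}\in\{L_1,L_2\}$; symmetrically $\overline{pq}\in\{M_1,M_2\}$. After relabelling I may assume $L_1=M_1=\overline{pq}$. I would then note that $L_2\neq M_2$, since otherwise $q\in M_1\cap M_2=L_1\cap L_2=\{p\}$, contradicting $p\neq q$; in particular there is at least one line (namely $M_2$) outside $\{L_1,L_2\}$.

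Next I would record the two consequences of modularity. Because $p$ is modular and the only lines of $\mathcal L$ through $p$ are $L_1,L_2$, every crossing point lies on $L_1\cup L_2$; likewise, using $q$, every crossing point lies on $L_1\cup M_2$. Now take an arbitrary line $N\in\mathcal L$ with $N\notin\{L_1,L_2\}$. Since $N$ does not pass through $p$, the intersection point $b=N\cap L_2$ is a crossing point different from $p$; as $b\in L_2$ but $b\notin L_1$, the containment $b\in L_1\cup M_2$ forces $b\in M_2$, so $b=L_2\cap M_2$. Setting $c:=L_2\cap M_2$, this shows that every line other than $L_1$ passes through $c$.

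Finally I would verify that the configuration is genuinely a near pencil. The lines $L_2$, $M_2$, and every further line all meet at $c$, while $L_1$ avoids $c$: if $c\in L_1$ then $c\in L_1\cap L_2=\{p\}$, yet $p\notin M_2$ and $c\in M_2$, a contradiction. Hence $\mathcal L$ consists of the $s-1\ge 2$ concurrent lines through $c$ together with the single transversal $L_1$, which is exactly a near pencil.

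The bookkeeping is routine once the labelling $L_1=M_1=\overline{pq}$ is fixed; the only step requiring genuine care is ensuring that the common point $c$ is distinct from $p$ and is missed by $L_1$, so that one obtains a near pencil rather than a pencil. This is precisely the role of the observation $L_2\neq M_2$, which prevents the two pencils of lines through $p$ and through $q$ from collapsing onto a single point.
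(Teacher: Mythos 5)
Your proof is correct and takes essentially the same approach as the paper: both arguments use modularity of $p$ and $q$ to confine all crossing points to the union of the two lines through each, and then show every line except $\overline{pq}$ passes through the point $c=L_2\cap M_2$ (the paper's $r=L_p\cap L_q$). Your version simply spells out more explicitly the checks that the paper leaves implicit ($L_2\neq M_2$ and $c\notin L_1$, which rule out a pencil).
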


\begin{proof} Let $p$ and $q$ be modular points of multiplicity 2.
Since $\mathcal L$ is supersolvable, given a crossing point 
other than $p$, the line from $p$ to that point
is in $\mathcal L$. But $p$ has multiplicity 2, so every crossing point 
must be on one or the other of the two lines through $p$.
Likewise, every crossing point 
must be on one or the other of the two lines through $q$.

Let $L$ be the line through both $p$ and $q$; thus $L\in \mathcal L$.
Let $L_p$ be the other line in $\mathcal L$ through $p$ and 
let $L_q$ be the other line in $\mathcal L$ through $q$.
Let $r$ be the point where $L_p$ and $L_q$ meet.
Thus any crossing point not on $L$ must be on both 
$L_p$ and $L_q$; i.e., $r$ is the only crossing point not on $L$.
Thus every line in $\mathcal L$ other than $L$ must contain $r$,
so $\mathcal L$ is a near pencil.
\end{proof}

\subsection{Homogeneous supersolvable line arrangements (mostly for $\operatorname{char}(K)=0$)}
By our foregoing results, we see that it remains to understand
supersolvable line arrangements such that all modular points have
the same multiplicity $m$ (we say such a supersolvable line arrangement is
{\em homogeneous} or {\em $m$-homogeneous}) with $m\geq 3$. It follows from Lemma \ref{TohLem} that
$t_k=0$ for $k>m$. For an $m$-homogeneous supersolvable line arrangement $\mathcal L$,
we denote $m$ by $m_{\mathcal L}$.

\subsubsection{The values of $t_{m_{\mathcal L}}$ that arise for $\operatorname{char}(K)=0$} 
When $K$ is algebraically closed but of finite characteristic, there is no bound
to the number of modular points a supersolvable line arrangement can have.
(Just take all lines defined over a finite field $F$ of $a$ elements.
Then the arrangement has $a^2+a+1$ lines and the same number of crossing points;
all are modular and all have multiplicity $a+1$.)
In characteristic 0 things are very different, as we show in Theorem \ref{BndsThm}.

To prove the theorem, we will use the following result.

\begin{proposition}\label{no3OnLine}
For an $m$-homogeneous supersolvable complex line arrangement $\mathcal L$ with $m=m_{\mathcal L}\geq3$,
no three modular points are collinear.
\end{proposition}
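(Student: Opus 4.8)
The plan is to argue by contradiction. Suppose three modular points $p_1,p_2,p_3$ of $\mathcal L$ are collinear. Since $\mathcal L$ is $m$-homogeneous each has multiplicity $m\ge 3$, and since $p_1$ is modular while $p_2$ is a crossing point distinct from it, the line $L$ joining them lies in $\mathcal L$ (and it contains $p_3$ as well). The first step is to choose coordinates in which the configuration becomes \emph{additive}: applying a projective transformation, I would take $L$ to be the line at infinity and send $p_1,p_2,p_3$ to the three distinct points at infinity of slopes $0$, $\infty$, and $1$; this is possible because the affine group acts on the line at infinity as $\mathrm{PGL}_2$, hence $3$-transitively on directions. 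Then the $m-1$ lines of $\mathcal L$ through $p_i$ other than $L$ become three families of parallel affine lines: $\{y=c:c\in S_1\}$ through $p_1$, $\{x=d:d\in S_2\}$ through $p_2$, and $\{y=x+e:e\in S_3\}$ through $p_3$, where $S_1,S_2,S_3\subset\mathbb C$ each have \emph{exactly} $m-1$ elements (one for each line, the count being $m-1$ since each $p_i$ has multiplicity $m$).

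The key step is to exploit the modularity of $p_3$. Given any line $y=c_1$ with $c_1\in S_1$ and any line $x=c_2$ with $c_2\in S_2$, their intersection point $(c_2,c_1)$ is a crossing point of $\mathcal L$. Because $p_3$ is modular, the line joining $p_3$ to this point belongs to $\mathcal L$; it passes through $p_3$, so it lies in the third pencil, and it is not $L$ since the point is affine. Hence it is one of the lines $y=x+e$ with $e\in S_3$, and substituting the point gives $e=c_1-c_2$. Therefore $c_1-c_2\in S_3$ for every $c_1\in S_1$ and $c_2\in S_2$; that is, the difference set satisfies $S_1-S_2\subseteq S_3$.

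The contradiction now comes from a sumset count, which is precisely where characteristic $0$ enters. For finite nonempty subsets $A,B$ of a field of characteristic $0$ one has $|A-B|\ge |A|+|B|-1$ (the characteristic-$0$ analogue of the Cauchy--Davenport inequality, valid since a field of characteristic $0$ contains $\mathbb Q$ and hence a torsion-free additive group). Applying this with $A=S_1$, $B=S_2$ yields
\[
|S_3|\ \ge\ |S_1-S_2|\ \ge\ |S_1|+|S_2|-1\ =\ 2(m-1)-1\ =\ 2m-3\ >\ m-1\ =\ |S_3|,
\]
the strict inequality holding because $m\ge 3$. This is absurd, so no three modular points are collinear.

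The essential hypothesis is that the ground field is $\mathbb C$ (or at least of characteristic $0$): the sumset bound fails in positive characteristic, where indeed $\mathbb F_q-\mathbb F_q=\mathbb F_q$, matching the fact already noted in the excerpt that over finite fields a supersolvable arrangement (all lines of $\mathbb P^2(\mathbb F_q)$) has many collinear modular points. The only points requiring care are the two bookkeeping claims that underlie the normalization, namely that each $S_i$ has exactly $m-1$ elements and that modularity of $p_3$ forces the closure $S_1-S_2\subseteq S_3$; once these are verified the additive reformulation makes the contradiction immediate, so I expect the main (if modest) obstacle to be setting up the affine normalization cleanly rather than any hard estimate.
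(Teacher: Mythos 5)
Your proof is correct, and it takes a genuinely different route from the paper's. The paper argues algebro-geometrically: writing $C_p$, $C_q$, $C_r$ for the unions of the $m-1$ lines of $\mathcal L$ through the three collinear modular points (excluding the common line $L$), modularity of one of the points forces $C_r$ to pass through the complete intersection $C_p\cap C_q$ of $(m-1)^2$ points, whence $F_r=aF_p+bF_q$; restricting to $L$ then gives $(y-z)^{m-1}=ay^{m-1}+bz^{m-1}$, which is impossible in characteristic $0$ since the left side has a multiple root at a point where the derivative of the right side does not vanish. You instead send $L$ to the line at infinity, so the three pencils become parallel families with intercept sets $S_1,S_2,S_3$ of size $m-1$; modularity of the third point yields the difference-set containment $S_1-S_2\subseteq S_3$, and the torsion-free sumset bound $|S_1-S_2|\ge|S_1|+|S_2|-1=2m-3>m-1$ gives the contradiction. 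Both proofs use modularity of only one of the three points, and each isolates exactly where characteristic $0$ enters (the derivative step, respectively the Cauchy--Davenport-type bound, fails over finite fields, as it must). Your argument is more elementary---no complete-intersection or pencil-of-curves machinery, just incidence bookkeeping plus a standard additive inequality---and it makes the finite-field examples transparent (there $S_1-S_2$ can equal $S_3$); the paper's argument has the advantage of reusing a technique that reappears verbatim in its Section 3 classification of arrangements with three modular points. One small point of rigor on your side: the inequality $|A-B|\ge|A|+|B|-1$ deserves a one-line justification, e.g., the subgroup of $(\mathbb C,+)$ generated by the finitely many intercepts is finitely generated and torsion-free, hence admits a translation-invariant total order, and then $a_1+b_1<\cdots<a_k+b_1<a_k+b_2<\cdots<a_k+b_l$ exhibits $k+l-1$ distinct elements; your parenthetical appeal to $\mathbb Q\subseteq\mathbb C$ gestures at this but is not by itself the proof.
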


\begin{proof}
Suppose that $p, q$ and $r$ are collinear modular points. Then the line $L$ that contains them is in 
$\mathcal L$. Moreover, $\mathcal L$ contains $m-1$ additional lines through each of $p,q$ and $r$.
Denote the union of these $m-1$ lines through $p$ by $C_p$. Similarly, we have $C_q$ and $C_r$.
The intersection of the curves $C_p$ and $C_q$ is a complete intersection of $(m-1)^2$ points,
which are also contained in $C_r$. Since the curves all have degree $m-1$, we see that
$C_r$ is in the pencil defined by $C_p$ and $C_q$. I.e., the forms $F_p, F_q$ and $F_r$
defining the curves are such that $F_r$ is a linear combination of $C_p$ and $C_q$.
we can choose coordinates such that $L$ is $x=0$, $p$ is $x=y=0$, $q$
is $x=z=0$ and $r$ is $y=z=1$. In terms of these coordinates, the restrictions of
$F_p,F_q,F_r$ to $L$ are $y^{m-1}$, $z^{m-1}$ and $ay^{m-1}+bz^{m-1}=(y-z)^{m-1}$
for some nonzero constants $a$ and $b$. Setting $z=1$, we thus see that
$ay^{m-1}+b=(y-1)^{m-1}$, so $ay^{m-1}+b$ has a multiple root at $y=1$.
This contradicts the fact that the derivative $a(m-1)y^{m-2}+b$ is not 0 at $y=1$.
\end{proof}

\begin{theorem}\label{BndsThm}
For an $m$-homogeneous supersolvable complex line arrangement $\mathcal L$ with $m=m_{\mathcal L}\geq3$,
we have $1\leq t_m\leq 4$.
\end{theorem}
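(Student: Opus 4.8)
The bound $t_m\ge 1$ is immediate: since $\mathcal L$ is supersolvable it has a modular point, and since it is $m$-homogeneous every modular point has multiplicity $m$; conversely, by Lemma \ref{TohLem} every crossing point of multiplicity $m$ is modular, so $t_m$ is exactly the number of modular points. The whole content is therefore the upper bound, and I would prove it by contradiction, assuming $t_m=N$ and letting $p_1,\dots,p_N$ denote the modular points, which by Proposition \ref{no3OnLine} lie in the position that no three are collinear.

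First I would record a crude count. Through each $p_i$ pass exactly $m$ lines, and among them lie the $N-1$ lines $\overline{p_ip_j}$ joining $p_i$ to the remaining modular points; these are distinct because no three modular points are collinear. Hence $N-1\le m$, i.e.\ $N\le m+1$, which already settles the case $m=3$, since then $N\le 4$. Applying the same observation to all modular points at once counts the lines meeting at least one modular point: each such line contains one or two modular points, and the two-point lines are exactly the $\binom N2$ joins, so the number of lines through a modular point equals $Nm-\binom N2$ and $s\ge Nm-\binom N2$.

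For $m\ge 4$ I would bring in Hirzebruch's inequality \eqref{hirzebruch}. Because $\mathcal L$ is $m$-homogeneous, $t_k=0$ for $k>m$ and $t_m=N$, so the right-hand side of \eqref{hirzebruch} contains the term $N(m-4)$, which grows with $m$; this is the mechanism that should cap $N$ for large $m$. To exploit it I would first pin down the arrangement structurally, proving that in fact every line passes through a modular point, so that $s=Nm-\binom N2$ and the only crossing points besides the $p_i$ are the ``diagonal'' points where two lines through distinct modular points meet. The no-three-collinear hypothesis then controls these diagonal points---a diagonal point $\overline{p_ap_b}\cap\overline{p_cp_d}$ acquires a further line for each remaining modular point it must be joined to, by supersolvability---which is what lets one estimate $t_2$ and $t_3$. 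Feeding $s$ and these multiplicity counts into \eqref{hirzebruch}, and if necessary into the supersolvable inequality \eqref{at} and the combinatorial identity \eqref{comb-id}, should produce a numerical contradiction as soon as $N\ge 5$.

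The main obstacle is the structural step: showing that every line meets a modular point and, more importantly, obtaining sharp enough control of the multiplicities of the diagonal crossing points, since a loose bound on $t_2$ and $t_3$ makes Hirzebruch's inequality too weak (the low-multiplicity points created by the $\binom N2$ joins are numerous and must be accounted for, not merely discarded). I expect this is where supersolvability must be used in an essential way---through the forced joins to diagonal points and the inequality \eqref{at}---rather than through the crude pencil count alone, which only yields $N\le m+1$.
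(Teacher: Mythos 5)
Your outline is in fact the paper's: the lower bound and the identification of $t_m$ with the number $N$ of modular points (via Lemma \ref{TohLem}), the use of Proposition \ref{no3OnLine}, the reduction to the case where every line meets a modular point so that $s=Nm-\binom{N}{2}$, and a finish by \eqref{hirzebruch} and \eqref{comb-id}. Your preliminary steps are correct; in particular $N\le m+1$ does settle $m=3$, and it matches the paper's observations that $N\ge 7$ forces $m\ge 6$, $N=6$ forces $m\ge 5$, and $N=5$ forces $m\ge 4$. One phrasing correction: you cannot ``prove that every line passes through a modular point'' as a structural fact --- for $N\ge 5$ no such arrangements exist at all, which is the conclusion --- what is needed is the deletion argument: remove every line through no modular point; the modular points remain modular with unchanged multiplicity, so it suffices to refute the reduced arrangement.

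The genuine gap is that your final step --- ``should produce a numerical contradiction as soon as $N\ge 5$'' --- is asserted rather than carried out, and you yourself flag the required control of $t_2$ and $t_3$ as the main obstacle without resolving it; that control is the substance of the proof. What is needed is: (i) every non-modular crossing point must be joined to all $N$ modular points, with at most two of them per line, so it has multiplicity at least $\lceil N/2\rceil\ge 3$, hence $t_2=0$ and \eqref{hirzebruch} reads $\frac{3}{4}t_3\ge s+\sum_{k>4}(k-4)t_k$; (ii) for $N=6$ a triple point lies on three joins pairing off all six modular points, i.e.\ it is a diagonal point of the complete quadrangle on four of them that also lies on the join of the remaining two, and since the three diagonal points of a complete quadrangle are never collinear in characteristic $0$, at most two per $4$-subset qualify, giving $t_3\le 2\binom{6}{4}=30$ (similarly $t_3\le 3\binom{5}{4}=15$ for $N=5$); (iii) these bounds kill all $m\ge N$ via \eqref{hirzebruch}, but \emph{not} the boundary cases $m=N-1$: for $N=6$, $m=5$ one gets $\frac{3}{4}\cdot 30=22.5\ge 15+6=21$, and for $N=5$, $m=4$ one gets $\frac{3}{4}\cdot 15=11.25\ge 10$, both consistent, so no contradiction arises. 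The paper closes these boundary cases by first using \eqref{comb-id} to pin $t_3$ down ($t_3\le 15$, resp.\ $t_3=5$), and only then does \eqref{hirzebruch} fail. So \eqref{comb-id} is not the optional supplement your ``if necessary'' suggests, and without (ii) and the exact counts in (iii) the mechanism you describe does not terminate. As written, the proposal is a correct plan whose decisive estimates are missing.
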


\begin{proof}
First we show that $t_m<7$. Suppose $t_m\geq 7$ for some $m\geq 3$. Each 
non-modular crossing point is connected by a line 
to each of the $t_m\geq 7$ modular points. Since at most two modular points can lie on any line
by Proposition \ref{no3OnLine}, we see that each crossing point must have multiplicity at least 4.
Also, each modular point has multiplicity $m\geq 6$ since each one connects to each of the others.
Thus $t_2=t_3=0$, but this is impossible by Inequality \eqref{hirzebruch}.

Next we show that $t_m<6$. Suppose $\mathcal L$ has $t_m=6$. 
It is enough to show $t_m<6$ under the assumption that every line in 
${\mathcal L}$ contains a modular point. (This is because if we let ${\mathcal L}'$ be
the line arrangement obtained from ${\mathcal L}$ by deleting all lines not through
a modular point, ${\mathcal L}'$ still has $t_{m_{{\mathcal L}'}}=6$.)
Since every modular point is on a line in ${\mathcal L}$ through another modular point, 
we have $m\geq 5$. Every crossing point $q$ of ${\mathcal L}$ also connects to every modular point
so has multiplicity at least 3 (since a line can go through at most 2 modular points), with multiplicity exactly
3 if and only if $q$ is 3 lines through pairs of modular points. 

There are $2\binom{6}{4}=30$ possible locations for crossing points of multiplicity 3, hence $t_3\leq 30$.
To see this note that there are $\binom{6}{4}$ ways to pick 4 of the 6 modular points. There are 
3 reducible conics through these 4 points. The singular points of these three conics
are crossing points where two lines through disjoint pairs chosen from the 4 points 
intersect. In order to get a point $q$ of multiplicity 3, the line $H$ through the remaining 2 points of the 6 modular points
must contain $q$. This might not happen for any of the three singular points,
but it can be simultaneously true for at most two of the three singular points, since at most
two of the singular points can be on the line $H$  (this is merely because the three singular points
cannot be collinear in characteristic 0). Thus we get at most $2\binom{6}{4}=30$ possible locations 
for crossing points of multiplicity 3. 

Now apply Inequality \eqref{hirzebruch}, using the fact that our assumption (that every line in 
${\mathcal L}$ contains a modular point) implies that ${\mathcal L}$ has $(m-5)6+\binom{6}{2}$ lines:
$$22.5=\frac{3}{4}30\geq \frac{3}{4}t_3\geq ((m-5)6+\binom{6}{2})+(m-4)6.$$
For $m\geq 6$ this is $22.5\geq 12m-39\geq 33$, thus the only possibility for $t_m=6$ is $m=5$.
For $m=5$ we see ${\mathcal L}$ has $\binom{6}{2}=15$ lines and every crossing point
has multiplicity at least 3 and at most 5, so from Equation \eqref{comb-id} we get:
$$105=\binom{15}{2}=3t_3+6t_4+10t_5=3t_3+6t_4+60$$
so $15=t_3+2t_4$, hence $t_3\leq 15$. Inequality \eqref{hirzebruch} now gives
$(3/4)15\geq 15+6$, which is false.

Finally, we show that $t_m<5$. So assume $t_m=5$. Arguing as before, we may assume that every line in 
${\mathcal L}$ contains a modular point. We still have that al non-modular points have multiplicity at least 3,
and the 5 modular points have multiplicity $m\geq4$. Each choice of 4 of the 5 modular points gives
3 possible locations for a triple point, hence $t_3\leq 3(5)=15$. Thus Inequality \eqref{hirzebruch} gives
$11.25=(3/4)15\geq (\binom{5}{2}+(m-4)5)+(m-4)5=10m-30$, which is impossible for $m\geq 5$.
For $m=4$ we see ${\mathcal L}$ has $\binom{5}{2}=10$ lines and every crossing point
has multiplicity at at least 3 and at most 4, so from Equation \eqref{comb-id} we get:
$$45=\binom{10}{2}=3t_3+6t_4=3t_3+30$$
so $5=t_3$. Inequality \eqref{hirzebruch} now gives
$(3/4)5\geq 10$, which is false.
\end{proof}

\begin{example}
For $m$-homogeneous supersolvable line arrangements
over both the complex numbers and the reals, all four cases $1\leq t_{m_{\mathcal L}}\leq 4$ arise. 
It is easy to obtain examples with
exactly one modular point; see Section \ref{singleModPt}. (However, the fact that there are many examples
makes it hard to classify them!) It is also easy to obtain examples with exactly two modular points;
see Corollary \ref{n>m Cor}. For exactly three modular points, consider the line arrangement 
defined by the linear factors of $xyz(x^n-y^n)(x^n-z^n)(y^n-z^n)$ for $n\geq2$.
The coordinate vertices are the modular points, and have multiplicity $n+2$. For $n=2$ the arrangement is real
(see the arrangement of 9 lines shown in Figure \ref{FigSS 3 3});
for $n>2$ it is complex but not real. Taking $n=1$, so $xyz(x-y)(x-z)(y-z)$, gives the only example
we know over the complexes or reals with exactly four modular points; see Case 2 of Figure \ref{Fig m=3}.
(We thank \c{S}.\ Toh\v{a}neanu for pointing out that a line arrangement equivalent to the one
defined by the linear factors of $xyz(x^n-y^n)(x^n-z^n)(y^n-z^n)$ for $n=2$ arose as 
an example in section 3.1.1 of \cite{AT}, to show that a certain bound on the number of
crossing points was sharp. For the line arrangements given by 
$xyz(x^n-y^n)(x^n-z^n)(y^n-z^n)$ the bound is $s\leq d^2+d+1$, where $s=n^2+3n+3$ is the number of 
crossing points and $d=m_{\mathcal L}-1=n+1$. Thus we see that $s=d^2+d+1$, so 
this bound is in fact sharp for all values of $n$.)
\end{example}

\subsubsection{Classifying $m$-homogeneous $\mathcal L$ for $t_m>1$ and $m=3$}
Consider the case of a line arrangement $\mathcal L$ with two or more modular points
of multiplicity $m\geq 3$. Since we have at least two modular points,
we pick two and call them $p$ and $q$.

First say $m=3$. We will show that there are three cases, shown in Figure \ref{Fig m=3}:
$\mathcal L$ has either 5, 6 or 7 lines, and either 2, 4 or 7 modular points, respectively.
The case of 7 lines occurs only in characteristic 2. The other cases occur for any field.

Clearly, $\mathcal L$ has at least 5 lines:
the line $L$ defined by $p$ and $q$,
and in addition lines $p\in L_{pi}$ and $q\in L_{qi}$, for $i=1,2$.
No other lines in $\mathcal L$ (if any) can contain $p$ or $q$.
Let $r_1$ be where $L_{p1}$ and $L_{q1}$ meet and let
$r_2$ be where $L_{p2}$ and $L_{q2}$ meet.
And let $s_1$ be where $L_{p1}$ and $L_{q2}$ meet and let
$s_2$ be where $L_{p2}$ and $L_{q1}$ meet.
Any other line in $\mathcal L$ must intersect the lines
$L_{pi}$ and $L_{qi}$ only at
$r_1,r_2,s_1,$ or $s_2$. 

One possibility is that $\mathcal L$ has only the five lines mentioned above.
Alternatively, assume $\mathcal L$ has another line, $A$. Of the six pairs two points chosen
from the four points $r_1,r_2,s_1$ and $s_2$, $A$ must contain
either $r_1$ and $r_2$ or $s_1$ and $s_2$ ($A$ cannot contain $r_1$ and $s_1$,
for example, because that line is $L_{p1}$).
Up to relabeling, the case $r_1$ and $r_2$ is the same as $s_1$ and $s_2$,
so say $A$ contains $r_1$ and $r_2$. 
Up to projective equivalence, we may assume that
$p=(0,0,1)$, $q=(0,1,0)$, $r_1=(1,0,0)$ and
$r_2=(1,1,1)$, in which case $s_1=(1,0,1)$ and $s_2=(1,1,0)$. So a second possibility is that 
$\mathcal L$ has six lines, with $A$ being the sixth line.
Note that in this case that $\mathcal L$ has 4 modular points:
the points $p,q,r_1$ and $r_2$ are modular, and all have multiplicity 3.
The only option for $\mathcal L$ to contain an additional line is for 
the additional line (call it $B$) to be the line through $s_1$ and $s_2$.
But $A$ is $y-z=0$ and $B$ is $x-y-z=0$, so 
$A$ and $B$ intersect at the point $(2,1,1)$.
When the ground field does not have characteristic 2, this is not on
any of the three lines through $p$ (or on any of the three lines through $q$),
hence including $B$ would make $\mathcal L$ not be supersolvable.
Thus when the characteristic is not 2, $\mathcal L$ must either have 5 or 6 lines,
and be Case 1 or Case 2 shown in Figure \ref{Fig m=3}.
If the characteristic is 2, the point $(2,1,1)$ is on the line through $p$ and $q$,
in which case $\mathcal L$ consists of the 7 lines of the Fano plane,
there are 7 crossing points, all are modular and have multiplicity 3.

\begin{figure}
\hbox to\hsize{\hfil\hbox{\begin{tikzpicture}[line cap=round,line join=round,x=0.5cm,y=0.5cm]
\clip(-3,-1) rectangle (3,5.5);
\draw [line width=1.pt,domain=-4.3:7.3] plot(\x,{(--8.928203230275512--3.4641016151377553*\x)/2.});
\draw [line width=1.pt,domain=-4.3:7.3] plot(\x,{(--8.928203230275509-3.4641016151377553*\x)/2.});
\draw [line width=1.pt,domain=-4.3:7.3] plot(\x,{(-4.-0.*\x)/-4.});
\draw [line width=1.pt,domain=-4.3:7.3] plot(\x,{(-7.464101615137757--2.*\x)/-3.4641016151377553});
\draw [line width=1.pt,domain=-4.3:7.3] plot(\x,{(-7.464101615137757-2.*\x)/-3.4641016151377553});
\begin{scriptsize}
\draw [fill=white] (-2.,1.) circle (2.5pt);
\draw [fill=white] (2.,1.) circle (2.5pt);
\draw [fill=black] (0.,4.464101615137755) circle (2.5pt);
\draw [fill=black] (-1.,2.7320508075688776) circle (2.5pt);
\draw [fill=black] (1.,2.7320508075688776) circle (2.5pt);
\draw [fill=black] (0.,2.154700538379252) circle (2.5pt);
\draw[color=black] (0,0) node {Case 1};
\end{scriptsize}
\end{tikzpicture}}\hfil
\hbox{\begin{tikzpicture}[line cap=round,line join=round,x=0.5cm,y=0.5cm]
\clip(-3,-1) rectangle (3,5.5);
\draw [line width=1.pt,domain=-4.3:7.3] plot(\x,{(--8.928203230275512--3.4641016151377553*\x)/2.});
\draw [line width=1.pt,domain=-4.3:7.3] plot(\x,{(--8.928203230275509-3.4641016151377553*\x)/2.});
\draw [line width=1.pt,domain=-4.3:7.3] plot(\x,{(-4.-0.*\x)/-4.});
\draw [line width=1.pt] (0.,0.5) -- (0.,6.3);
\draw [line width=1.pt,domain=-4.3:7.3] plot(\x,{(-7.464101615137757--2.*\x)/-3.4641016151377553});
\draw [line width=1.pt,domain=-4.3:7.3] plot(\x,{(-7.464101615137757-2.*\x)/-3.4641016151377553});
\begin{scriptsize}
\draw [fill=white] (-2.,1.) circle (2.5pt);
\draw [fill=white] (2.,1.) circle (2.5pt);
\draw [fill=white] (0.,4.464101615137755) circle (2.5pt);
\draw [fill=black] (-1.,2.7320508075688776) circle (2.5pt);
\draw [fill=black] (1.,2.7320508075688776) circle (2.5pt);
\draw [fill=white] (0.,2.154700538379252) circle (2.5pt);
\draw [fill=black] (0.,1) circle (2.5pt);
\draw[color=black] (0,0) node {Case 2};
\end{scriptsize}
\end{tikzpicture}}\hfil
\hbox{\begin{tikzpicture}[line cap=round,line join=round,x=0.5cm,y=0.5cm]
\clip(-3,-1) rectangle (3,5.5);
\draw [line width=1.pt] (0.,2.154700538379252) circle (0.577cm);
\draw [line width=1.pt,domain=-4.3:7.3] plot(\x,{(--8.928203230275512--3.4641016151377553*\x)/2.});
\draw [line width=1.pt,domain=-4.3:7.3] plot(\x,{(--8.928203230275509-3.4641016151377553*\x)/2.});
\draw [line width=1.pt,domain=-4.3:7.3] plot(\x,{(-4.-0.*\x)/-4.});
\draw [line width=3.pt,color=white] (0.,.5) -- (0.,6.3);
\draw [line width=1.pt] (0.,0.5) -- (0.,6.3);
\draw [line width=3.pt,domain=-4.3:7.3,color=white] plot(\x,{(-7.464101615137757--2.*\x)/-3.4641016151377553});
\draw [line width=1.pt,domain=-4.3:7.3] plot(\x,{(-7.464101615137757--2.*\x)/-3.4641016151377553});
\draw [line width=3.pt,domain=-4.3:7.3,color=white] plot(\x,{(-7.464101615137757-2.*\x)/-3.4641016151377553});
\draw [line width=1.pt,domain=-4.3:7.3] plot(\x,{(-7.464101615137757-2.*\x)/-3.4641016151377553});
\begin{scriptsize}
\draw [fill=white] (-2.,1.) circle (2.5pt);
\draw [fill=white] (2.,1.) circle (2.5pt);
\draw [fill=white] (0.,4.464101615137755) circle (2.5pt);
\draw [fill=white] (-1.,2.7320508075688776) circle (2.5pt);
\draw [fill=white] (1.,2.7320508075688776) circle (2.5pt);
\draw [fill=white] (0.,2.154700538379252) circle (2.5pt);
\draw [fill=white] (0.,1) circle (2.5pt);
\draw[color=black] (0,0) node {Case 3 (char 2)};
\end{scriptsize}
\end{tikzpicture}}\hfil}
\caption{Classification of supersolvable line arrangements with 2 or more modular points
(shown as white dots), all of multiplicity $m=3$.}
\label{Fig m=3}
\end{figure}
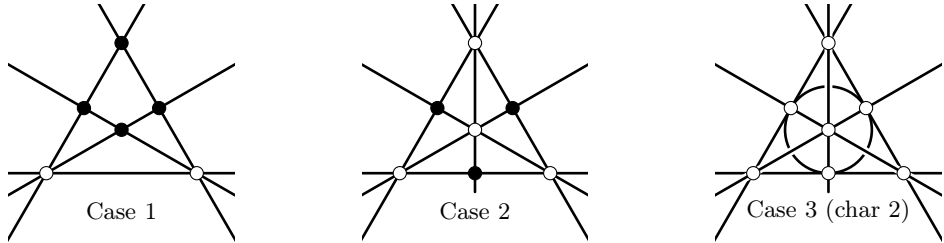

\subsubsection{Classifying $m$-homogeneous $\mathcal L$ over the reals for $t_m>1$ and $m>3$}
Now we consider the case $m\geq 4$ for real line arrangements.
So, in addition to the line $L$ through $p$ and $q$, there are $m-1$ lines through
$p$ and $m-1$ lines through $q$. These lines form a complete intersection (i.e., a grid)
of $(m-1)^2$ crossing points. The only other crossing points for these
$2m-1$ lines are $p$ and $q$. Certainly $\mathcal L$ could consist of only these
$2m-1$ lines, in which case $p$ and $q$ are the only modular points
and we have $t_k=0$ except for $t_m=2$ and $t_2=(m-1)^2$.

The question now is what additional lines can be added to these $2m-1$
while maintaining supersolvability. To answer this, let's choose coordinates so that
$p$ becomes $(0,1,0)$ and $q$ becomes $(1,0,0)$. Thus the line through $p$ and $q$
is now the line at infinity, and the $m-1$ other lines through $p$ are parallel to the $x=0$ axis,
and the $m-1$ other lines through $q$ are parallel to the $y=0$ axis.

Any additional line must avoid $p$ and $q$, and must intersect the $m-1$ vertical
lines only at points where they meet the $m-1$ horizontal lines. By inspection we can see that this 
can happen in exactly to ways. First is that the four corners of the grid form a rectangle and the $i$th vertical line
(counting from the left) meets the $i$th horizontal line (counting up from the bottom)
meet on the anti-diagonal of the rectangle (in which case the anti-diagonal can be added to
$\mathcal L$). The second way is that the four corners of the grid form a rectangle (as before) and the $i$th vertical line
(counting from the left) meets the $i$th horizontal line (counting DOWN this time from the top)
meet on the main diagonal of the rectangle (in which case the main diagonal can be added to
$\mathcal L$). In case both cases hold, both diagonals can be added if and only if $m$ is even.

Thus there are three cases: $\mathcal L$ has $2m-1$ lines and we have $t_m=2$ and $t_2=(m-1)^2$
but only two modular points, namely $p$ and $q$;
$\mathcal L$ has $2m$ lines where the additional line is one of the two major diagonals
(assuming the lines are spaced correctly)
and we have still have only two modular points ($p$ and $q$), with $t_m=2$, $t_2=(m-1)^2-(m-1)+1$; or
$\mathcal L$ has $2m+1$ lines where the additional lines are the two major diagonals
(assuming the lines are spaced correctly and $m$ is even), in which case either $m=4$ and we have
$t_m=3$, $t_2=6$, $t_3=4$ and there are three modular points ($p$, $q$ and the center of the rectangle),
or $m>4$ and we have
$t_m=2$, $t_2=(m-1)^2-(2m-1)+2$, $t_3=2m-4$ and $t_4=1$ 
and there are only two modular points ($p$ and $q$).

Thus we have a complete classification of real supersolvable line arrangements when there is more than one
modular point of multiplicity at least 3. 

\subsubsection{Classifying $m$-homogeneous $\mathcal L$ over the complexes for $t_m>2$ and $m>3$}
Now we consider the case $m\geq 4$ for complex line arrangements with
at least 3 modular points. By Theorem \ref{BndsThm}, 
the number of modular points cannot be more 
than 4.

We begin with the case of exactly $t_m=3$ modular points.
If $\mathcal L$ has a line that does not contain a modular point, deleting it
gives an arrangement which is still supersolvable, so we first assume
every line in $\mathcal L$ goes through a modular point.

After a change of coordinates, we may assume that the three modular points, $p,q,r$,
are the coordinate vertices of ${\mathbb P}^2$, so say $p=(0,0,1), q=(0,1,0), r=(1,0,0)$.
In addition to the three coordinate axes, $\mathcal L$ must contain $m-2$ lines through
each of $p$, $q$ and $r$. Let $F_p$ be the form defining the union of
these $m-2$ lines through $p$, other than the coordinate axes. Note
that $F_p$ is a form of degree $m-2$ and
involves only the variables $x$ and $y$, hence is $F_p(x,y)$. Likewise we have
$F_q(x,z)$ and $F_r(y,z)$ for $q$ and $r$. Since the coordinate axes are not among the lines
defined by $F_p$, $F_q$ or $F_r$, we see that none of these forms is divisible by a variable.

The crossing points for the lines from
$F_p$ and the lines from $F_q$ form a complete intersection of $(m-2)^2$ points
on which $F_r$ also vanishes, so $F_r=aF_p+bF_q$ for some scalars $a$ and $b$.
The only term that $F_p$ and $F_q$ can have in common is $x^{m-2}$. Thus in order that
all terms involving $x$ cancel in $aF_p+bF_q$ so that $F_r$ involves only $y$ and $z$, 
we see that $x^{m-2}$ is the only term in either $F_p$ or $F_q$ involving $x$.
Thus (after dividing by the coefficient of $x^{m-2}$ in each case) 
we have $F_p=x^{m-2}-\alpha y^{m-2}$ and $F_q=x^{m-2}-\beta z^{m-2}$.
By absorbing the $\alpha$ into $y$ and the $\beta$ into $z$, we get
$F_p=x^{m-2}-y^{m-2}$ and $F_q=x^{m-2}-z^{m-2}$, so
$F_r=y^{m-2}-z^{m-2}$. 

Thus if every line in $\mathcal L$ goes through one of the three modular points,
then the lines in $\mathcal L$ correspond to the linear factors of
$xyz(x^{m-2}-y^{m-2})(x^{m-2}-z^{m-2})(y^{m-2}-z^{m-2})$.
Now we check that no line not through $p$, $q$ or $r$ can be added to $\mathcal L$ 
while still preserving
supersolvability. If such a line $L$ existed, it would need to intersect every line
of $\mathcal L$ in a crossing point. In particular, $L$ must contain one of 
the $(m-2)^2$ intersection points of the lines from $F_p$ and the lines from $F_q$. 
Let $n: = m-2$. By an appropriate change of coordinates
obtained by multiplying $x,y$ and $z$ by appropriate powers of 
an $n$th root of 1, we may assume that $L$ contains $(1,1,1)$.
Let $\epsilon=\cos(2\pi/n)+\imath \sin(2\pi/n)$
be a primitive $n$th root of 1.
The line $L$ must intersect $y-\epsilon z=0$ at a crossing point 
(hence at $(\epsilon^i,\epsilon,1)$ for some $1 \le i \le n$)
and also $y-\epsilon^2 z=0$ at a crossing point
(hence at $(\epsilon^j,\epsilon^2,1)$ for some $1 \le j \le n$).
The question is whether $i$ and $j$ exist such that these points
lie on a line through $(1,1,1)$ which does not go through $p$, $q$ or $r$.

The lines through $(1,1,1)$ are of the form $a(x-z)+b(y-z)=0$. For the line not
to go through $p$, $q$ or $r$, we need $ab\neq0$. Thus we can write the line
as $c=(y-z)/(x-z)$ for some $c\neq 0$.
For $(\epsilon^i,\epsilon,1)$ and $(\epsilon^j,\epsilon^2,1)$ both to lie on this line
we must have 
$$\frac{\epsilon-1}{\epsilon^i-1}=\frac{\epsilon^2-1}{\epsilon^j-1}.$$
This simplifies to 
$$\epsilon^{i-1}(\epsilon+1)=\epsilon^{j-1}+1.$$
Thus the complex norms are equal; i.e.,  $|\epsilon+1|=|\epsilon^{j-1}+1|$.
But if $\gamma=\cos(\theta)+\imath \sin(\theta)$, the norm $|\gamma+1|$ is a decreasing function
of $\theta$ for $0\leq\theta\leq\pi$, so the only possibilities
for $|\epsilon+1|=|\epsilon^{j-1}+1|$ are $j=2, n$. 
If $j=2$, then $\epsilon^{i-1}(\epsilon+1)=\epsilon^{j-1}+1$
forces $i=1$, so the line through $(\epsilon^i,\epsilon,1)$ and $(\epsilon^j,\epsilon^2,1)$
then is $x-y=0$, which contains $p$. If $j=n$, then 
$\epsilon^{i-1}(\epsilon+1)=\epsilon^{j-1}+1=(1+\epsilon)/\epsilon$ forces $\epsilon^i=1$.
and hence $i=n$, so the line is $x-z=0$, which contains $q$.

Thus the only possibility for 3 modular points of multiplicity $m>3$, is (up to choice of coordinates)
for the line arrangement to be the lines defined by the linear factors of
$xyz(x^{m-2}-y^{m-2})(x^{m-2}-z^{m-2})(y^{m-2}-z^{m-2})$.

Now suppose $\mathcal L$ has 4 modular points with $m>3$. We can, up to choice
of coordinates, assume that the four points are $p,q,r,s$, where $p,q,r$ are as above, and $s=(1,1,1)$.
If we delete any lines not through $p,q,r$, then the resulting arrangement must
come from the linear factors of $xyz(x^{m-2}-y^{m-2})(x^{m-2}-z^{m-2})(y^{m-2}-z^{m-2})$. 
To get $\mathcal L$, we must add back in lines through $s$ which intersect the lines coming from
$xyz(x^{m-2}-y^{m-2})(x^{m-2}-z^{m-2})(y^{m-2}-z^{m-2})$ only at crossing points
for the lines from $xyz(x^{m-2}-y^{m-2})(x^{m-2}-z^{m-2})(y^{m-2}-z^{m-2})$.
But as we just saw there are no such lines. Thus $\mathcal L$ having 4 modular points with $m>3$
is impossible.

Thus, up to choice of coordinates, the only complex supersolvable line arrangement with 4 modular points
is the one we found before; i.e., $xyz(x^{m-2}-y^{m-2})(x^{m-2}-z^{m-2})(y^{m-2}-z^{m-2})$ with $m=3$,
displayed in Case 2 of Figure \ref{Fig m=3}. 
And up to choice of coordinates the only complex supersolvable line arrangements with 3 modular points
are given by the linear factors of $xyz$ when $m=2$, and by the linear factors of 
$xyz(x^{m-2}-y^{m-2})(x^{m-2}-z^{m-2})(y^{m-2}-z^{m-2})$
for $m > 3$.

We do not have a classification of complex supersolvable line arrangement with just 1 or 2 modular points.
If for $m\geq 3$ you remove one or more of the linear factors of $y^{m-2}-z^{m-2}$ from the set
of linear factors of $xyz(x^{m-2}-y^{m-2})(x^{m-2}-z^{m-2})(y^{m-2}-z^{m-2})$, then we get examples
of complex supersolvable line arrangement with just 2 modular points. Thus more examples occur 
over the complexes than over the reals, but it is not clear what the full range of possibilities is.

In any case, we have given a full classification over the complexes for 
supersolvable line arrangement with 3 or 4 modular points. We discuss the case of 1 modular point
in the next section.

\subsection{Having a single modular point}\label{singleModPt}
The case that there is a single modular point is the hardest to classify and we can give only partial
results in this case.

We begin with a lemma.

\begin{lemma}\label{maxMultLem}
Let $\mathcal L$ be a line arrangement (not necessarily supersolvable, not necessarily over the reals).
Let $m$ be the maximum of the multiplicities of the crossing points and let $n$ be the number of 
crossing points. If $n<2m$, then $\mathcal L$ is either a pencil or near pencil.
\end{lemma}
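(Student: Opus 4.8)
The plan is to locate a crossing point $p$ realizing the maximum multiplicity $m$, and to control how many lines of $\mathcal L$ avoid $p$. Exactly $m$ of the $s$ lines pass through $p$, so $s-m$ lines miss it. If $s-m=0$, then every line passes through $p$ and $\mathcal L$ is a pencil, so I may assume $s-m\geq1$ and aim to show $s-m=1$, which will exhibit $\mathcal L$ as a near pencil. The entire argument is purely incidence-geometric, using only that two distinct lines of the projective plane meet in exactly one point, so it is valid over any field $K$.

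The main step is to rule out $s-m\geq2$ by a counting argument driven by the hypothesis $n<2m$. First I record the basic observation that if $M$ is any line not through $p$, then the $m$ lines through $p$ meet $M$ in $m$ \emph{distinct} points: two distinct lines through $p$ meet only at $p$, and $p\notin M$, so no two of them can meet $M$ at the same point. Hence each line avoiding $p$ carries at least $m$ crossing points, all different from $p$.

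Now suppose there are two distinct lines $M_1,M_2$ not through $p$. Writing $a_1,\dots,a_m$ for the intersections of $M_1$ with the pencil through $p$ and $b_1,\dots,b_m$ for those of $M_2$, each list consists of $m$ distinct crossing points. A point common to the two lists lies on both $M_1$ and $M_2$, hence equals the single point $M_1\cap M_2$; so the two lists overlap in at most one point and together contain at least $2m-1$ distinct crossing points. Since none of these equals $p$, I obtain $n\geq (2m-1)+1=2m$, contradicting $n<2m$. Therefore $s-m\leq1$.

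Combined with $s-m\geq1$, this forces exactly one line $M$ off $p$, so $m=s-1$ lines are concurrent at $p$ and one further line is present. It remains to confirm this is genuinely a near pencil, i.e.\ that no point other than $p$ has multiplicity $\geq3$ and that not all $s$ lines are concurrent: the former holds because $M$ meets each pencil line in a distinct point, so each such point has multiplicity $2$, and the latter because $M$ avoids $p$ while the $m\geq2$ pencil lines are concurrent only at $p$. The only real obstacle is the bookkeeping in the two-off-line count—specifically the claim that the two lists of intersection points share at most a single point—but this is immediate from the fact that two distinct lines meet in exactly one point of ${\mathbb P}^2$.
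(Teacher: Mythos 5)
Your proof is correct and follows essentially the same route as the paper: both arguments fix a point $p$ of maximal multiplicity $m$, observe that two lines avoiding $p$ would contribute at least $2m$ distinct crossing points (the $m$ intersections with the pencil through $p$ on each line, overlapping in at most the single point where the two lines meet, plus $p$ itself), contradicting $n<2m$. The only difference is presentational—you organize the count as $(2m-1)+1$ with an explicit case split on $s-m$, while the paper counts $(m+1)+(m-1)$ directly—so there is nothing substantive to add.
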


\begin{proof}
Assume $\mathcal L$ is not a pencil or a near pencil. Let $p$ be a point of multiplicity $m$
and take lines $A$ and $B$ not through $p$. Then $A$ and the $m$ lines through $p$ give
$m+1$ crossing points, and $B$ then gives at least another $m-1$ crossing points,
for a total of at least $2m$ crossing points.
\end{proof}

We now consider the case of a line arrangement $\mathcal L$ with a single modular point,
which we assume has multiplicity $m>2$; call it $p$. By \cite{AT}
every other crossing point of $\mathcal L$ has multiplicity less than $p$
(because for a supersolvable line arrangement, all points of maximum multiplicity 
are modular). Assume $\mathcal L$ is not a pencil or a near pencil.
Let $\mathcal L'$ be the arrangement obtained from $\mathcal L$
by removing the $m$ lines through $p$. We can recover $\mathcal L$
by adding to $\mathcal L'$ every line from $p$ to a crossing point of $\mathcal L'$.
What is difficult to know is how many lines get added, since one line through $p$
might contain more than one crossing point of $\mathcal L'$.
But we see that $t_m=1$ and $t_{k+1}=t_k'$ for all $2<k<m$, where $t_k'$
is the number of crossing points of $\mathcal L'$ of multiplicity $k$.
Even knowing how many lines are in $\mathcal L'$ and the value of $t_k'$ for every $k$, it's hard to
say how many lines are in $\mathcal L$, or what the value of $t_2$ is,
except in certain special situations. 

Suppose, for example, we know that no two crossing points of $\mathcal L'$ are on the same line through
$p$. Since $\mathcal L'$ has $t_2'+\dots+t_m'$ crossing points and $\mathcal L'$ 
has $s'$ lines, where $\binom{s'}{2}=\sum_kt_k'\binom{k}{2}$ (see \eqref{comb-id}), we then know that
$\mathcal L$ has $s=s'+t_2'+\dots+t_m'$ lines and then from 
$\binom{s}{2}=\sum_kt_k\binom{k}{2}$ we can determine $t_2$.

Alternatively, start with any line arrangement $\mathcal L'$ (over any field) which is not a pencil or a near pencil.
By Lemma \ref{maxMultLem}, $n'\geq2m'$, where $n'$ is the number of crossing points of 
$\mathcal L'$ and $m'$ is the maximum of their multiplicities.
For a general point $p$, no line through $p$ will contain more than one crossing point of 
$\mathcal L'$. Now add to $\mathcal L'$ each line from $p$ to a crossing point of $\mathcal L'$
to get a larger line arrangement $\mathcal L$ of $s=n'+s'$ lines, where $s'$ is the number of lines of $\mathcal L'$.
We also know that $t_{k+1}=t_k'$ for all $k>2$, and we can determine $t_2$ from
$\binom{s}{2}=\sum_kt_k\binom{k}{2}$. Moreover, $p$ is the unique modular point of $\mathcal L$.
Note that $p$ has multiplicity $n'\geq 2m'$ and the maximum multiplicity of any other crossing point of
$\mathcal L$ is $m'+1<2m'$. Thus if $\mathcal L$ has another modular point, it has multiplicity
$d<n'$, hence by our classification $\mathcal L$ has $d+n'-1$ lines. But in fact $s'\geq d+1$
since $\mathcal L'$ is not a pencil or near pencil, and
$\mathcal L$ has $s=s'+n'>d+1-n'$ lines. Thus $\mathcal L$ has a unique modular point, namely $p$.
Thus classifying line arrangements with a unique modular point, even when that point is general,
comes down to classifying line arrangements in general.

\subsection{Summary}

The real supersolvable line arrangements having more than one modular point
can be subsumed by one general construction. Take two points, $p$ and $q$, on a line $L$.
Take $a_p\geq 0$ additional lines through $p$ and $a_q\geq0$ additional lines through $q$.
This gives a supersolvable line arrangement as long as $a_p+a_q>0$.
In addition, if $a_p=a_q\geq 2$ and the obvious collinearity condition obtains,
an additional line can be added in two possible ways (shown by the dashed and dashed-dotted lines 
in Figure \ref{FigSS 3 3} in the case of $a_p=a_q=3$). If both can be added separately and if 
$a_p=a_q$ is odd, both can be added simultaneously.
These constructions cover all possible cases of real supersolvable line arrangements
with 2 or more modular points.

The case of complex supersolvable line arrangements with more than two modular points 
are all given, up to choice of coordinates, by the linear factors of 
$xyz(x^{m-2}-y^{m-2})(x^{m-2}-z^{m-2})(y^{m-2}-z^{m-2})$ for $m\geq3$.

\begin{figure}
\definecolor{uuuuuu}{rgb}{0.26666666666666666,0.26666666666666666,0.26666666666666666}
\definecolor{ududff}{rgb}{0.30196078431372547,0.30196078431372547,1.}
\begin{tikzpicture}[line cap=round,line join=round,x=1.5cm,y=1.5cm]
\clip(-3.3328037910523833,0) rectangle (4.790630352259127,4);
\draw [line width=1.pt,domain=-3.3328037910523833:4.790630352259127] plot(\x,{(--9.2088--3.42*\x)/3.1});
\draw [line width=1.pt,domain=-3.3328037910523833:4.790630352259127] plot(\x,{(--9.1608--2.98*\x)/4.28});
\draw [line width=1.pt,domain=-3.3328037910523833:4.790630352259127] plot(\x,{(-4.840644205324093--3.502567576181975*\x)/-1.5949665690340673});
\draw [line width=1.pt,domain=-3.3328037910523833:4.790630352259127] plot(\x,{(-4.5825417994860755--2.4425675761819754*\x)/-2.1549665690340674});
\draw [line width=1.pt,domain=-3.3328037910523833:4.790630352259127] plot(\x,{(--2.9767380742932996--0.43743242381802494*\x)/2.8949665690340676});
\draw [line width=1.pt,dash pattern=on 1pt off 2pt on 3pt off 4pt,domain=-3.3328037910523833:4.790630352259127] plot(\x,{(-0.06434015559752504-0.8570096793019588*\x)/-0.02709156021604148});
\draw [line width=1.pt,dash pattern=on 3pt off 3pt,domain=-3.3328037910523833:4.790630352259127] plot(\x,{(--1.6790264862969848-0.19851981163946997*\x)/0.6866775712265181});
\draw [line width=1.pt,domain=-3.3328037910523833:4.790630352259127] plot(\x,{(--4.9883830715538515--1.7245093507030378*\x)/2.042199994610631});
\draw [line width=1.pt,domain=-3.3328037910523833:4.790630352259127] plot(\x,{(-2.087427427447703--1.2870769268850129*\x)/-0.8527665744234366});
\begin{scriptsize}
\draw [fill=white] (-2.04,0.72) circle (2.5pt);
\draw[color=black] (-1.9,0.6)node {$p$};
\draw [fill=white] (0.8549665690340674,1.157432423818025) circle (2.5pt);
\draw[color=black] (.7,1)node {$q$};
\draw [fill=uuuuuu] (0.01951044368149565,2.992105070126037) circle (2.0pt);
\draw [fill=uuuuuu] (-0.00758111653454583,2.135095390824078) circle (2.0pt);
\draw [fill=uuuuuu] (-0.377379016256191,2.554246375614137) circle (2.0pt);
\draw [fill=uuuuuu] (0.309298554970327,2.355726563974667) circle (2.0pt);
\draw [fill=uuuuuu] (-0.04277485656649297,1.0217827717761603) circle (2.0pt);
\draw [fill=uuuuuu] (3.2187409229399466,1.5146011580092924) circle (2.0pt);
\draw [fill=uuuuuu] (0.0021999946106308274,2.4445093507030378) circle (2.0pt);
\draw [fill=uuuuuu] (-0.15984105596442819,2.3076758830446895) circle (2.0pt);
\draw [fill=uuuuuu] (0.13940057208450501,2.2374331086008934) circle (2.0pt);
\draw [fill=uuuuuu] (0.19480623768081337,2.607153199520421) circle (2.0pt);
\draw [fill=uuuuuu] (-0.2000943724477912,2.7498313697511456) circle (2.0pt);
\end{scriptsize}
\end{tikzpicture}
\caption{A supersolvable line arrangement with 2 modular points 
of equal multiplicity with possible added lines.}
\label{FigSS 3 3}
\end{figure}
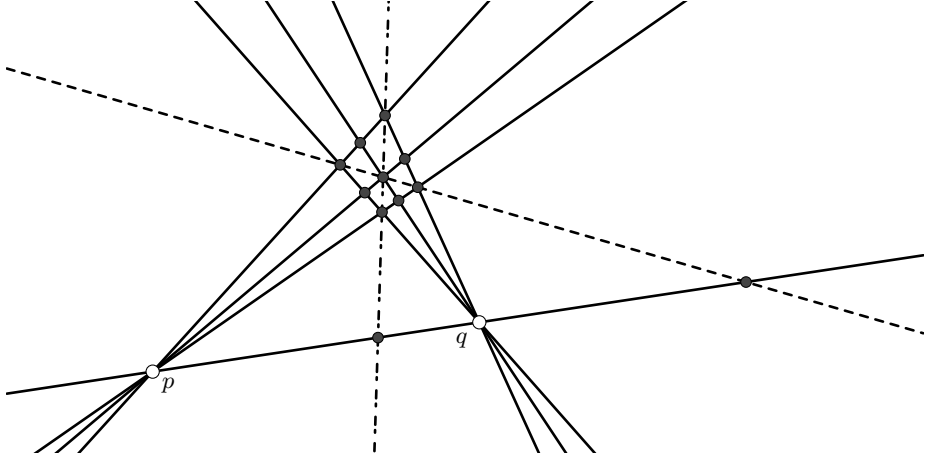

\section{Points of multiplicity 2 in supersolvable line arrangements}\label{Conjs}
\subsection{Questions and conjectures}
By Inequality \eqref{melchior}, every non-pencil real line arrangement has $t_2\geq 3$.
More generally, there is the still open Dirac-Motzkin Conjecture \cite{D}:

\begin{conjecture}\label{DMConj}
The inequality $t_2\geq \lfloor s/2\rfloor$ holds
for every non-pencil real line arrangement of $s$ lines.
\end{conjecture}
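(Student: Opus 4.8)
The plan is to pass to the projective dual, where the statement becomes the classical problem on ordinary lines. Dualizing the arrangement $\mathcal{L}$ of $s$ lines produces a set $\mathcal{P}$ of $s$ points in the dual real projective plane, with the property that a crossing point of multiplicity $k$ corresponds to a line meeting $\mathcal{P}$ in exactly $k$ points; in particular $t_2$ becomes the number of \emph{ordinary lines}, those meeting $\mathcal{P}$ in exactly two points, and the non-pencil hypothesis becomes the assumption that $\mathcal{P}$ is not collinear. Thus I would reduce Conjecture \ref{DMConj} to showing that $s$ non-collinear points in $\mathbb{RP}^2$ span at least $\lfloor s/2\rfloor$ ordinary lines.

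First I would record the elementary lower bound as a base. The Sylvester--Gallai theorem---equivalently, Melchior's Inequality \eqref{melchior}, which already gives $t_2\geq 3$, and whose dual is precisely the existence of ordinary lines---shows the count is positive, and in fact verifies the conjecture outright for small $s$, which can in any case be checked directly. The substance of the problem is to improve the constant from $O(1)$ to $s/2$.

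The heart of the argument is a structure theorem: if $\mathcal{P}$ spans fewer than a small constant multiple of $s$ ordinary lines, then all but boundedly many points of $\mathcal{P}$ lie on the real locus of a cubic curve, allowing degenerate cases (a conic plus a line, or three lines). I would then analyze each configuration using the group law on the cubic: collinearity of three points translates into an additive relation in the group, so counting ordinary lines reduces to counting pairs whose ``sum'' is \emph{not} a third point of $\mathcal{P}$, and the extremal point sets turn out to be cosets of finite subgroups. This recovers the sharp examples and, matched against $\lfloor s/2\rfloor$ in each case together with the small-$s$ verification, would complete the proof.

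The main obstacle---and the reason the full conjecture remains open in general---is establishing the structure theorem: controlling the global geometry of $\mathcal{P}$ from the purely local, incidence-theoretic hypothesis of few ordinary lines. This step requires genuinely algebraic input, via the interplay between point incidences and low-degree curves (a polynomial-method analysis of the dual curve), and is currently known only for $s$ sufficiently large. A complete unconditional proof would additionally need an effective, small threshold beyond which the structure theorem applies, so that the finitely many remaining cases are checkable; this is exactly where the difficulty concentrates.
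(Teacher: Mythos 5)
You should first note that the statement you were asked to prove is not a theorem of the paper at all: it is Conjecture \ref{DMConj}, the Dirac--Motzkin conjecture, which the paper attributes to Dirac \cite{D} and explicitly describes as still open. The paper contains no proof of it (Melchior's inequality \eqref{melchior} is the only ingredient the paper records in this direction), so there is no argument of the authors to compare yours against.

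Your proposal is an accurate outline of the known partial-progress strategy of Green and Tao: dualize so that crossing points of multiplicity $k$ become lines meeting the dual point set in exactly $k$ points, reduce to bounding ordinary lines below by $\lfloor s/2\rfloor$, prove a structure theorem placing all but boundedly many points on a cubic when there are few ordinary lines, and analyze collinearity via the group law on the cubic. But, as you yourself concede in your last paragraph, this is not a proof, and the gap is concrete and twofold. First, the structure theorem is only known for $s$ sufficiently large, so the argument yields $t_2\geq\lfloor s/2\rfloor$ only beyond some threshold $s_0$, whereas the conjecture asserts it for every $s$. Second, your fallback for the remaining cases does not exist: the Sylvester--Gallai/Melchior bound $t_2\geq 3$ verifies the conjecture only for $s\leq 7$, and ``checking directly'' all arrangements with $7<s\leq s_0$ is not feasible, since the threshold coming from the structure-theorem approach is astronomically large and there is no classification of arrangements in that range to check against. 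So what you have written is a correct account of the strongest known evidence for the conjecture, and an honest identification of exactly where it falls short; it cannot, however, stand as a proof of the statement, which remains open.
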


Things over $\mathbb C$ are more complicated. 
Four types of complex line arrangements with $t_2=0$ are currently known:
pencils of 3 or more lines; the lines defined by the linear factors of
$(x^n-y^n)(x^n-z^n)(y^n-z^n)$ for $n\geq 3$ (known as the Fermat arrangement, $\mathcal{F}_n$);
an arrangement due to F.\ Klein \cite{K} with 21 lines
and $t_k=0$ except for $t_3=28$ and $t_4=21$;
and an arrangement due to A.\ Wiman \cite{W} 
with 45 lines and $t_k=0$ except for $t_3=120$, $t_4=45$ and $t_5=36$
(see \cite{BDHHSS} for more information about the Klein and Wiman
arrangements).

We believe the following question is open.

\begin{question}
Are there any complex line arrangements with $t_2=0$ other than the four types listed above?
\end{question}

For the case of supersolvable line arrangements we pose the following conjecture.

\begin{conjecture}\label{conj1}
Every nontrivial complex supersolvable line arrangement has $t_2>0$.
\end{conjecture}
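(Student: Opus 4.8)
The plan is to reduce to the $m$-homogeneous case and then exploit the rigidity that supersolvability imposes once $t_2$ is assumed to vanish. First I would dispose of the inhomogeneous situations: if $\mathcal L$ is nontrivial and has two modular points of distinct multiplicities $m<n$, then Corollary \ref{n>m Cor} shows $\mathcal L$ is either a near pencil (excluded) or consists of the lines through two points $p,q$, in which case the $(n-1)(m-1)$ grid points away from $p$ and $q$ have multiplicity $2$, so $t_2>0$. Likewise the classification of arrangements with a modular point of multiplicity $2$ forces a pencil or near pencil. Hence we may assume $\mathcal L$ is $m$-homogeneous with $m=m_{\mathcal L}\geq 3$, and by Theorem \ref{BndsThm} we have $t_m\in\{1,2,3,4\}$. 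When $t_m\geq 3$, the classification over $\mathbb C$ identifies $\mathcal L$, up to coordinates, with the linear factors of $xyz(x^{m-2}-y^{m-2})(x^{m-2}-z^{m-2})(y^{m-2}-z^{m-2})$; there each coordinate axis meets the opposite Fermat lines in points of multiplicity exactly $2$ (e.g.\ $z=0$ meets $x-\zeta y=0$ at $(\zeta,1,0)$ and at no third line), so $t_2>0$.

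It remains to treat $t_m\in\{1,2\}$. If $m\leq 4$ and $t_2=0$, then every crossing point has multiplicity in $\{3,\dots,m\}\subseteq\{3,4\}$, so Theorem \ref{m=3,4} forces $\mathcal L$ to have no modular point, contradicting supersolvability; thus the conjecture holds for $m\leq 4$. The crux is therefore $m\geq 5$ with only one or two modular points, assuming for contradiction that $t_2=0$. I first record the structural facts to be used: fixing a modular point $p$ of multiplicity $m$, every line of $\mathcal L$ not through $p$ meets precisely the $m$ lines through $p$ and no other crossing point, so it carries exactly $m$ crossing points whose multiplicities sum to $s+m-1$; moreover Inequality \eqref{at} with $t_2=0$ bounds the number $n$ of crossing points by $n\leq m(s-m)/2+1$, and Inequality \eqref{hirzebruch} with $t_2=0$ forces $\tfrac34 t_3\geq s+\sum_{k\geq 5}t_k(k-4)$, so in particular $t_3\geq\tfrac43 s$.

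For $t_m=2$ the situation is the most accessible and I would attack it first. Choosing coordinates so that the two modular points are $p=(0,1,0)$ and $q=(1,0,0)$ with $L=\{z=0\}$, the $m-1$ remaining lines through $p$ and the $m-1$ through $q$ form a grid of $(m-1)^2$ points, and every additional line is the graph of an affine bijection $x\mapsto \alpha x+\beta$ carrying the $x$-coordinates $\{c_i\}$ of the vertical lines onto the $y$-coordinates $\{d_j\}$ of the horizontal lines. Assuming $t_2=0$ means these transversals must cover all $(m-1)^2$ grid points. The key point is that such bijections are extremely rigid: composing them with inverses of one another produces affine self-maps of $\{c_i\}$, and every finite subgroup of the complex affine group of the line is cyclic (it fixes a common center and is conjugate to a group of rotations). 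A sufficiently transitive such group forces $\{c_i\}$ to be a rotation-invariant set, i.e.\ a root-of-unity configuration, which is exactly the Fermat arrangement---and that arrangement has a third modular point, contradicting $t_m=2$. Turning this heuristic into a proof that the grid cannot be covered is the heart of the $t_m=2$ case.

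The main obstacle is the single modular point case $t_m=1$ together with the intermediate multiplicities that appear when $m\geq 6$. With only one modular point there is no grid to anchor the rigidity argument, and Section \ref{singleModPt} shows that general arrangements with a unique modular point are essentially as unconstrained as arbitrary line arrangements. Furthermore, Theorem \ref{BndsThm} bounds only the number of points of the top multiplicity $m$, whereas for $m\geq 6$ crossing points of multiplicity $5,6,\dots,m-1$ are permitted; these contribute to the right-hand side of \eqref{hirzebruch} and are not a priori controlled, so Hirzebruch and \eqref{comb-id} alone cannot close the argument---indeed the genuine $t_2=0$ Fermat arrangements satisfy both. What is really needed is a supersolvable-specific upper bound on $t_3$ (or, more generally, a strengthening of Theorem \ref{m=3,4} valid for all multiplicity ranges) that, combined with the forced lower bound $t_3\geq\tfrac43 s$ from \eqref{hirzebruch}, drives the configuration into the rigid Fermat-type geometry and hence manufactures extra modular points beyond the bound of Theorem \ref{BndsThm}. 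Establishing such a bound uniformly in $m$ is the decisive and still-missing step.
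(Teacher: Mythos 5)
The statement you set out to prove is Conjecture \ref{conj1}; it is an open conjecture, and the paper does not prove it --- Theorems \ref{BndsThm} and \ref{m=3,4} are offered only as partial evidence. Your proposal is likewise not a proof, as you yourself acknowledge at two places. The parts you do complete are correct, but they essentially reassemble the paper's own results: the non-homogeneous cases and the multiplicity-2 cases via Corollary \ref{n>m Cor} and the adjacent propositions; the case $t_m\geq 3$ via the classification (where indeed points such as $(\zeta,1,0)$, lying only on $z=0$ and $x-\zeta y=0$, give $t_2>0$); and the case $m\leq 4$ via Lemma \ref{TohLem} (which confines all multiplicities to $\{3,4\}$ once $t_2=0$) plus Theorem \ref{m=3,4}. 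What remains --- $m$-homogeneous arrangements with $m\geq 5$ and $t_m\in\{1,2\}$ --- is precisely where the conjecture is open. For $t_m=1$ you offer no argument at all, only the correct observation that \eqref{comb-id}, \eqref{hirzebruch} and \eqref{at} cannot close it (the Fermat arrangements satisfy all three with $t_2=0$) together with a wish for a supersolvable-specific upper bound on $t_3$; that missing bound is the genuine gap, and it is the hard part of the problem, as Section \ref{singleModPt} suggests.

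One substantive comment: your $t_m=2$ sketch is stronger than you give it credit for and can be closed, though not with the endgame you state. With $t_2=0$, each of the $(m-1)^2$ grid points needs a third line, necessarily a transversal; modularity of $q$ forces every transversal to meet each of the $m-1$ vertical lines in a grid point, so each transversal carries exactly $m-1$ grid points and there are at least $m-1$ transversals. The associated affine bijections give at least $m-1$ distinct affine self-maps of the set $\{c_i\}$ of vertical coordinates; the group they generate injects into the symmetric group on $\{c_i\}$, hence is finite, hence has a fixed point and is cyclic, and an orbit count forces $\{c_i\}$ (and likewise $\{d_j\}$) to be a single orbit, i.e., an affine image of the $(m-1)$-st roots of unity. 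Normalizing both coordinate sets to the roots of unity, a centroid argument shows every transversal is $y=\omega^t x$ with $\omega^{m-1}=1$, and all $m-1$ of them must be present. But then all transversals pass through $(0,0,1)$, which is a crossing point, while the line joining $(0,0,1)$ to $p=(0,1,0)$ is $x=0$, which is not among the $m$ lines of $\mathcal L$ through $p$ --- contradicting modularity of $p$. (Your claimed contradiction, the appearance of a ``third modular point,'' is not what happens: the configuration reached is not supersolvable at all, and it also has $t_2=m-1$ at the points where the transversals meet the line through $p$ and $q$.) None of this, however, touches the single-modular-point case, so the proposal as a whole remains a correct organization of the problem rather than a solution of it.
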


A much stronger conjecture was posed by \cite{AT}.

\begin{conjecture}\label{conj2}
Every non-pencil complex supersolvable line arrangement of $s$ lines has $t_2\geq s/2$.
\end{conjecture}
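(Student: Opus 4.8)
The plan is to convert the conjecture into a statement about triple points by exploiting the fibration forced by a modular point, and then to close the estimate with Hirzebruch's inequality \eqref{hirzebruch}. Fix a modular point $p$ of multiplicity $m$ and choose affine coordinates so that $p$ is the point at vertical infinity; the $m$ lines of $\mathcal L$ through $p$ become vertical lines $\ell_1,\dots,\ell_m$ and the remaining $d:=s-m$ lines become nonvertical transversals. Modularity is exactly the statement that any two transversals meet on some $\ell_i$. The first observation I would record is that every crossing point other than $p$ lies on exactly one vertical line (the line joining it to $p$ is in $\mathcal L$), so a point of multiplicity $2$ can only be the simple crossing of one transversal with one vertical line: two transversals cross on a vertical line and hence give multiplicity at least $3$, while two vertical lines meet only at $p$. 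Consequently a point $q\ne p$ of multiplicity $k$ lies on one vertical and $k-1$ transversals, and counting transversal--vertical incidences two ways yields the clean identity $\sum_{k\ge2}(k-1)t_k=md+m-1$, while $t_2$ equals the number of simple transversal--vertical crossings.

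With this dictionary in hand I would feed \eqref{hirzebruch}, rewritten as $t_2\ge s+\sum_{k>4}(k-4)t_k-\tfrac34 t_3$, into the problem. It then suffices to prove
\[
\tfrac34 t_3\le \tfrac{s}{2}+\sum_{k>4}(k-4)t_k ,
\]
and in particular, when no crossing point has multiplicity exceeding $4$, to prove the triple-point bound $t_3\le \tfrac23 s$. In the fibered picture a triple point off $p$ is precisely a pair of transversals meeting on some $\ell_i$ with no third transversal passing through; so the task becomes bounding the number of such ``$\mu=2$'' coincidences among the $d$ transversals over the $m$ special fibers. I would try to do this by a double count of transversal pairs against the fibers they are forced to meet over, combined with an algebraic rigidity input in the spirit of Proposition~\ref{no3OnLine}: the intercepts of the transversals over the $m$ values $c_1,\dots,c_m$ satisfy strong difference conditions (each pairwise intercept difference must land in a prescribed finite set), and one wants to show these conditions cannot simultaneously force a linear number of triple coincidences without also producing $\Omega(s)$ simple crossings.

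The hard part---and the reason the conjecture is still open---is exactly the control of $t_3$. The only purely numerical leverage available, from combining the identity above with \eqref{comb-id}, is $\sum_{k\ge3}\binom{k-1}{2}t_k=\tfrac12(m^2+d^2-3m-d+2)$, which merely caps $t_3$ by a quantity of order $m^2+d^2$, far larger than the target $\tfrac23 s$; and \eqref{hirzebruch} itself weighs $t_3$ on the \emph{wrong} side, so that its numerics alone permit configurations with $t_2\ll s/2$. A genuinely new ingredient therefore seems necessary: either a supersolvable-specific sharpening of \eqref{hirzebruch}, or a rigidity theorem showing that the transversal crossing pattern of a supersolvable arrangement cannot concentrate into triple (and higher) points. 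I would first test the whole scheme in the homogeneous case and for small $m$, where Theorem~\ref{BndsThm}, the inequality \eqref{at}, and the explicit classifications of this section constrain the possibilities enough that the triple-point bound $t_3\le\tfrac23 s$ may be provable directly, before attempting the general statement.
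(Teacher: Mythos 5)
You have not proved the statement, and you could not have matched a proof in the paper: Conjecture \ref{conj2} is an open conjecture (due to Anzis and Toh\u{a}neanu \cite{AT}), and the paper offers no proof of it over $\mathbb{C}$. Your proposal is candid about this, and that candor is accurate: everything before the admission is correct setup, and the admitted missing step is the entire mathematical content. Concretely, your dictionary at a modular point $p$ of multiplicity $m$ is right (every crossing point $q\neq p$ lies on exactly one of the $m$ ``vertical'' lines through $p$; a crossing of two transversals lies on a vertical and so has multiplicity at least $3$; hence $t_2$ counts simple transversal--vertical incidences), your double count $\sum_{k\geq2}(k-1)t_k=md+m-1$ with $d=s-m$ is correct, as is the consequence $\sum_{k\geq3}\binom{k-1}{2}t_k=\tfrac12(m^2+d^2-3m-d+2)$, and the reduction via \eqref{hirzebruch} to the bound $\tfrac34 t_3\le \tfrac{s}{2}+\sum_{k>4}(k-4)t_k$ is a valid sufficient condition. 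But no argument is given, or sketched in any checkable form, for that bound; the appeal to ``a rigidity theorem'' in the spirit of Proposition \ref{no3OnLine} is a wish, not a lemma. Since \eqref{hirzebruch} weighs $t_3$ on the favorable side for producing counterexamples (as you yourself note), purely numerical manipulation of \eqref{comb-id}, \eqref{hirzebruch}, \eqref{at} cannot close this, so the gap is genuine and not a matter of missing routine details.

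For comparison with what the paper actually establishes: over $\mathbb{R}$ the conjecture is a theorem (in the strengthened form $t_2\geq\max\{s-m,m\}$), and the engine is Lemma \ref{AT Lemma} --- every line not through a modular point contains a point of multiplicity $2$ --- whose proof orders the transversals $H_i$ by slope and finds two consecutive ones crossing strictly between adjacent horizontal lines. That argument is intrinsically real; the absence of a complex analogue is exactly why the paper poses Question \ref{QuesOverC} as open. Over $\mathbb{C}$ the paper proves only partial results: Theorem \ref{m=3,4} (if all multiplicities are $3$ or $4$ the arrangement is not supersolvable), whose proof combines \eqref{at}, \eqref{hirzebruch} and \eqref{comb-id} in numerics quite close in flavor to yours but concludes far less than $t_2\geq s/2$; and the classification for three or four modular points (Theorem \ref{BndsThm} and the Fermat-type arrangements), where the conjecture is checked directly. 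So your fibration framework is a reasonable staging ground --- it is essentially the same geometry underlying Lemma \ref{AT Lemma} and inequality \eqref{at} --- but a proof would require the new ingredient you name and do not supply; a more realistic intermediate target, as you suggest at the end, is Conjecture \ref{conj1} or Question \ref{QuesOverC} rather than the full $s/2$ bound.
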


We have found all complex supersolvable line arrangements with at least 3 modular points,
and for these $t_2\geq s/2$ holds. Thus if the conjecture is false, then it must fail for
a line arrangement with either one or at most 2 modular points.

It is also interesting to ask:

\begin{question}\label{DMQuesOverC}
Which non-pencil complex line arrangements of $s$ lines fail to satisfy $t_2\geq \lfloor s/2\rfloor$?
\end{question}

Of course, as noted above, there are non-pencil line arrangements 
with $t_2=0$, and for these $t_2\geq \lfloor s/2\rfloor$ fails to hold.
Also, by adding or deleting lines from such line arrangements one can sometimes
get additional examples. For example, the line arrangement $\mathcal L$ with $s=3n$ lines defined
by the linear factors of $(x^n-y^n)(x^n-z^n)(y^n-z^n)$ has $t_2=0$; by
adding the line $x=0$, we get a line arrangement $\mathcal L'$ with $s=3n+1$ 
and $t_2=n$, so $t_2\geq \lfloor s/2\rfloor$ still fails. 
For another example, each line of the Klein arrangement of 21 lines contains four crossing points of multiplicity 4
and four of multiplicity 3. By removing one line we thus get an arrangement of $s=20$ lines
with $t_4=17$, $t_3=28$ and $t_2=4$, so here too $t_2\geq \lfloor s/2\rfloor$ fails.
But this leaves the question: are there any examples where $t_2\geq \lfloor s/2\rfloor$ fails to hold
which do not come in this way from the known examples with $t_2=0$?

If $\mathcal{L}$ is defined over $\mathbb{R}$, \cite{AT}
proves Conjecture \ref{conj2} over the reals (see \cite[Theorem 2.4]{AT}). A key step in their proof is 
\cite[Lemma 2.2]{AT}, a version of which we now state.
For the convenience of the reader we include a slightly simplified version of the proof from \cite{AT}.

\begin{lemma}\label{AT Lemma}
Let $p$ be a modular point of some multiplicity $m$ in a non-pencil real supersolvable line arrangement 
$\mathcal{L}$ containing $s$ lines. Then every line in $\mathcal{L}$ not containing $p$ contains 
a crossing point of multiplicity 2.
\end{lemma}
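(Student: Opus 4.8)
The plan is to fix a modular point $p$ of multiplicity $m$ and a line $A \in \mathcal{L}$ with $p \notin A$, and then exhibit an explicit crossing point of multiplicity exactly $2$ on $A$. Since $p$ is modular, every crossing point is joined to $p$ by a line of $\mathcal{L}$; in particular $A$ meets each of the $m$ lines through $p$ in a crossing point, and these $m$ intersection points are distinct (they lie on distinct lines through $p$, and none equals $p$). So $A$ carries at least $m$ crossing points. The key idea is to look at how these points are distributed \emph{along} $A$ and use the real linear order on the line $A$ (this is where reality is essential) to find an extreme one that cannot have high multiplicity.

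**First I would** set up the following counting/parity structure. Label the $m$ lines through $p$ as $L_1, \dots, L_m$, and let $a_i = A \cap L_i$. Consider any other line $B \in \mathcal{L}$ with $p \notin B$, $B \neq A$. Then $B$ also meets each $L_i$, and the point $B \cap A$ is a crossing point of $A$; I want to understand when such a $B$ can pass through one of the $a_i$, thereby raising its multiplicity above $2$. The natural approach is to consider the pencil of lines through $p$ as inducing, by intersection with $A$, an identification of the points of $A$ with the directions at $p$; the real structure then gives a cyclic/linear order. The plan is to argue that among the $m$ points $a_1, \dots, a_m$ on the real line $A$, an \emph{extreme} one (say the leftmost, after choosing an affine chart in which $A$ is an ordinary real line and $p$ is off it) must be a double point.

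**The hard part will be** ruling out that the extreme point $a_i$ lies on yet another line of $\mathcal{L}$ not through $p$. Here I would use an angular or convexity argument: a second line $B\in\mathcal{L}$ through $a_i$ meets $A$ only at $a_i$, so it must cross all the other lines $L_j$ through $p$, and by modularity each such crossing $B\cap L_j$ is itself a crossing point lying on some line $L_k$ through $p$; tracking the real order of the slopes of $B$ versus the pencil through $p$ should force $a_i$ to be non-extreme, contradicting the choice. Concretely, placing $p$ at infinity so the $L_j$ become a family of parallel lines, the points $a_j$ become equally the traces of these parallels on $A$, linearly ordered, and any additional line $B$ through the extreme trace $a_i$ must re-cross some parallel on the ``wrong'' side, producing a crossing point that cannot be covered by the pencil through $p$ unless $B$ passes through $p$ after all. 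I expect this ordering/extremality argument, rather than any algebra, to be the crux.

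**Finally** I would assemble these observations: the extreme point $a_i$ on $A$ lies on $L_i$ (through $p$) and on no other line of $\mathcal{L}$ avoiding $p$, and it cannot lie on a second line through $p$ since the $L_j$ are concurrent at $p \neq a_i$. Hence $a_i$ has multiplicity exactly $2$, and it lies on $A$, which is exactly the claim. I would keep the exposition short, since the authors announce they are giving a simplified version of the argument from \cite{AT}, so the emphasis should be on the clean extremality step that replaces whatever more computational argument appears in the original.
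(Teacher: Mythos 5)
Your key claim --- that an \emph{extreme} one of the points $a_i = A \cap L_i$ (in the affine chart where $p$ is at infinity and the $L_i$ are parallel) must have multiplicity $2$ --- is false, so the proposal does not go through. Here is a counterexample. Let $\mathcal{L}$ consist of the six lines $L_1: y=0$, $L_2: y=1$, $L_3: y=2$, $A: x=0$, $H_1: y=x$, and $H_3: y=2-x$, and let $p$ be the point at infinity common to $L_1,L_2,L_3$. Every crossing point of $\mathcal{L}$ lies on one of $L_1,L_2,L_3$ (the crossings are $p$; the points $(0,0),(0,1),(0,2)$ on $A$; and $(1,1)\in L_2$, $(2,2)\in L_3$, $(2,0)\in L_1$), so $p$ is modular of multiplicity $m=3$ and $\mathcal{L}$ is a nontrivial real supersolvable arrangement. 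On $A$ the crossing points are $a_1=(0,0)$, $a_2=(0,1)$, $a_3=(0,2)$, of multiplicities $3,2,3$ respectively: \emph{both} extreme points have multiplicity $3$, and the unique double point on $A$ is the middle one. So no extremality argument can locate the double point; the lemma only asserts that some crossing point on $A$ has multiplicity $2$, and its position along $A$ is not determined (the parenthetical remark closing the paper's proof already hints at this).

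The reason your argument for the claim collapses is a misreading of what modularity forbids. You argue that a second line $B \in \mathcal{L}$ through an extreme point $a_i$ would ``re-cross some parallel on the wrong side, producing a crossing point that cannot be covered by the pencil through $p$.'' But every intersection $B \cap L_j$ lies on $L_j$, which is itself a line of $\mathcal{L}$ through $p$; such crossings can never violate modularity of $p$, no matter where they occur. Modularity constrains only intersections of two lines that \emph{both} avoid $p$. Consequently a single extra line through any one of the $a_i$ --- extreme or not --- is always compatible with $p$ being modular (as $H_1$ and $H_3$ above show), and extremality plays no role. The paper's proof instead argues by contradiction using all the points simultaneously: if every $a_i$ carried an extra line $H_i$ (necessarily of finite nonzero slope), then $H_1$ and $H_m$ must have slopes of opposite sign, since otherwise they would meet above the top or below the bottom parallel line, at a point on no line of $\mathcal{L}$ through $p$; hence some \emph{consecutive} pair $H_i, H_{i+1}$ has slopes of opposite sign, and those two lines meet at a point strictly between $L_i$ and $L_{i+1}$, again off every line of $\mathcal{L}$ through $p$, contradicting modularity. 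That interplay between extra lines attached to different points $a_i$ is the step your proposal is missing, and it is where the real order (via slope signs and an intermediate ``sign change'' index) genuinely enters.
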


\begin{proof}
At left in Figure \ref{FigAT} we see the $m$ lines ($L_1,\dots,L_m$ enumerated from bottom to top)
through $p$ and some line $L$ not through $p$. To these we've added a dotted line below $L_1$,
and a dashed line above $L_m$. After a change of coordinates, the dotted line
becomes $y=0$, the dashed line becomes the line $z=0$ at infinity, $L$ becomes $x=0$ and $p$ becomes
the point $(1,0,0)$. Thus in the affine plane as shown at right in Figure \ref{FigAT}, 
the lines $L_i$ become horizontal lines and $L$ becomes vertical.

Let $p_i$ be the point of intersection of $L_i$ with $L$. Since $p$ is modular, every line
in $\mathcal{L}$ (other than $L$ itself) must intersect $L$ at one of the points $p_i$.
We ant to show that one of the points $p_i$ has multiplicity 2.
Suppose by way of contradiction that the multiplicity of $p_i$
is more than 2 for each $i$. Thus we can pick an additional line $H_i$ in $\mathcal{L}$ through $p_i$ for each $i$.
The slope of $H_i$ in the affine picture at right in Figure \ref{FigAT} is defined and not 0.

For each $i\neq $, the intersection of $H_i$ and $H_j$ must be on one of the lines $L_k$, since $p$ is modular.
If the slopes of $H_1$ and $H_m$ have the same sign, it is easy to see that they intersect either above
$L_m$ (if the slopes are both positive and $H_1$ has the larger slope, or if the slopes are both negative
and $H_1$ has the more negative slope) or below $L_1$ (if the slopes are both positive 
and $H_m$ has the larger slope, or if the slopes are both negative and $H_m$ has the more negative slope).

Thus in order for $p$ to be modular, $H_1$ and $H_m$ must have slopes of opposite sign.
This means as you go from $H_1$ to $H_2$ and on to $H_m$, there is a least $i$ such that
$H_i$ and $H_{i+1}$ have slopes of opposite sign. But this means that $H_i$ and $H_{i+1}$
intersect between $L_i$ and $L_{i+1}$ and hence that the point of intersection is not on
any of the horizontal lines $L_k$, contradicting modularity of $p$.
Thus at least one of the points $p_i$ must have multiplicity 2.
(For example, we could have $p_m$ have multiplicity 2 so there would be no $H_m$, 
and $H_1,\dots,H_{m_1}$ could all meet
at a point of $L_m$.)
\end{proof}

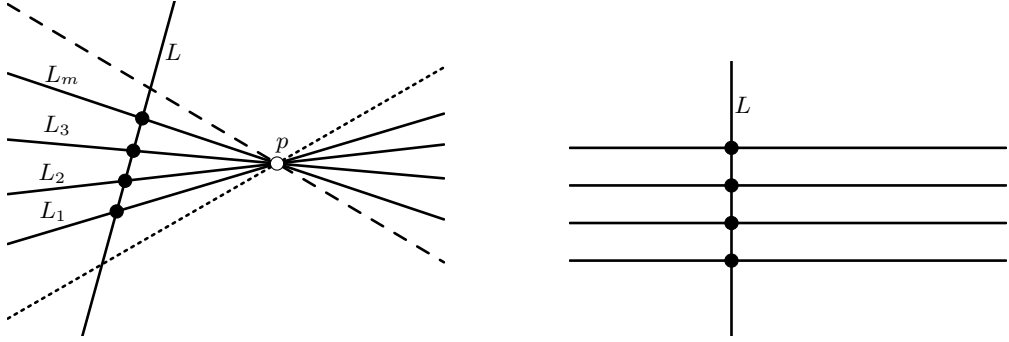
\begin{figure}
\hbox to\hsize{\hfil\hbox{\begin{tikzpicture}[line cap=round,line join=round,x=.5cm,y=.5cm]
\clip(-4.3,-2.58) rectangle (7.3,6.3);
\draw [line width=1.pt,domain=-4.3:7.3] plot(\x,{(--4.6708--1.22*\x)/4.08});
\draw [line width=1.pt,domain=-4.3:7.3] plot(\x,{(-6.7644-0.46*\x)/-4.04});
\draw [line width=1.pt,domain=-4.3:7.3] plot(\x,{(-8.6124--0.34*\x)/-3.82});
\draw [line width=1.pt,domain=-4.3:7.3] plot(\x,{(-11.8324--1.34*\x)/-4.});
\draw [line width=1.pt,dotted,domain=-4.3:7.3] plot(\x,{(--1.6096--2.64*\x)/4.58});
\draw [line width=1.pt,dash pattern=on 5pt off 5pt,domain=-4.3:7.3] plot(\x,{(-16.842--2.7*\x)/-4.56});
\draw [line width=1.pt,domain=-4.3:7.3] plot(\x,{(--1.2828--0.8*\x)/0.22});
\begin{scriptsize}
\draw [fill=white] (2.86,2.) circle (2.5pt);
\draw[color=black] (3.,2.49) node {$p$};
\draw[color=black] (-3.09,0.72) node {$L_1$};
\draw[color=black] (-3.11,1.74) node {$L_2$};
\draw[color=black] (-2.99,3.02) node {$L_3$};
\draw[color=black] (-2.85,4.36) node {$L_m$};
\draw[color=black] (.1,4.97) node {$L$};
\draw [fill=black] (-1.404142075043397,0.7249379089331018) circle (2.5pt);
\draw [fill=black] (-1.18,1.54) circle (2.5pt);
\draw [fill=black] (-0.96,2.34) circle (2.5pt);
\draw [fill=black] (-0.7233810232345198,3.200432642783564) circle (2.5pt);
\end{scriptsize}
\end{tikzpicture}}\hfil
\hbox{\definecolor{xdxdff}{rgb}{0.49019607843137253,0.49019607843137253,1.}
\definecolor{uuuuuu}{rgb}{0.26666666666666666,0.26666666666666666,0.26666666666666666}
\definecolor{cqcqcq}{rgb}{0.7529411764705882,0.7529411764705882,0.7529411764705882}
\begin{tikzpicture}[line cap=round,line join=round,x=.5cm,y=.5cm]
\clip(-4.3,-1) rectangle (7.3,6.3);
\draw [line width=1.pt] (0.,-3.08) -- (0.,6.3);
\draw [line width=1.pt,domain=-4.3:7.3] plot(\x,{(-9.-0.*\x)/-3.});
\draw [line width=1.pt,domain=-4.3:7.3] plot(\x,{(-6.-0.*\x)/-3.});
\draw [line width=1.pt,domain=-4.3:7.3] plot(\x,{(-3.-0.*\x)/-3.});
\draw [line width=1.pt,domain=-4.3:7.3] plot(\x,{(-12.-0.*\x)/-3.});
\begin{scriptsize}
\draw [fill=black] (0.,3.) circle (2.5pt);
\draw[color=black] (0.3,5.15) node {$L$};
\draw [fill=black] (0.,2.) circle (2.5pt);
\draw [fill=black] (0.,1.) circle (2.5pt);
\draw [fill=black] (0.,4.) circle (2.5pt);
\end{scriptsize}
\end{tikzpicture}}\hfil}
\caption{At left, a modular point $p$ of multiplicity $m$ in a real
supersolvable line arrangement $\mathcal{L}$ 
and a line $L$ in $\mathcal{L}=\{L_1,\dots,L_m\}$ not through $p$, and 
at right an affine version of the same arrangement after an appropriate change of coordinates
moving the dashed line to infinity.}
\label{FigAT}
\end{figure}

We now state and give a simplified proof of a slightly strengthened version of \cite[Theorem 2.4]{AT}.

\begin{theorem}
Let $\mathcal{L}$ be a real non-pencil supersolvable line arrangement
containing $s$ lines. Let $p$ be any modular point of $\mathcal{L}$ and let $m$ be the multiplicity of $p$.
Then $t_2 \ge \max\{s-m,m\}\geq s/2$. 
\end{theorem}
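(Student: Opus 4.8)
The plan is to prove the two lower bounds $t_2 \ge s-m$ and $t_2 \ge m$ separately; once both are in hand the chain $t_2 \ge \max\{s-m,m\} \ge s/2$ is automatic, since the maximum of two reals is at least their average and $(s-m)+m = s$. So the real content is the pair of inequalities, and I would handle them with two different tools.

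For $t_2 \ge s-m$, I would read off Lemma \ref{AT Lemma} directly. There are exactly $s-m$ lines of $\mathcal{L}$ not passing through $p$, and the lemma attaches to each such line $L$ a crossing point $p_L \in L$ of multiplicity $2$; note $p_L \ne p$ since $p \notin L$. The point I would stress is that $L \mapsto p_L$ is injective. Indeed, let $q$ be any multiplicity-$2$ crossing point; it lies on exactly two lines, and since $q$ is a crossing point with $q \ne p$ and $p$ is modular, the line $\overline{pq}$ lies in $\mathcal{L}$ and passes through $q$, hence is one of those two lines. Consequently $q$ lies on exactly one line of $\mathcal{L}$ that misses $p$. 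So if $p_L = p_{L'}$ then $L = L'$, and the $s-m$ points $p_L$ are therefore distinct points of multiplicity $2$, giving $t_2 \ge s-m$.

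For $t_2 \ge m$, I would instead invoke Melchior's inequality \eqref{melchior}, which applies because $\mathcal{L}$ is real and not a pencil. Since $p$ has multiplicity $m$ we have $t_m \ge 1$, so the summand $(m-3)t_m$ on the right of \eqref{melchior} is at least $m-3$ whenever $m \ge 3$, while every other summand $(k-3)t_k$ with $k \ge 3$ is nonnegative. Hence $t_2 \ge 3 + (m-3)t_m \ge 3 + (m-3) = m$; and when $m \le 3$ the cruder consequence $t_2 \ge 3 \ge m$ already suffices. Either way $t_2 \ge m$, for every modular point $p$, without any hypothesis on whether $m$ is the maximum multiplicity.

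The one nonobvious step — and the only place I expect any friction — is recognizing that $t_2 \ge m$ does \emph{not} come from Lemma \ref{AT Lemma}, which only constrains lines missing $p$ and naturally yields the $s-m$ bound; the leverage for $m$ comes from Melchior's inequality, exploiting that the chosen modular point itself feeds its multiplicity into the defect sum $\sum_{k \ge 3}(k-3)t_k$ via $t_m \ge 1$. I do not anticipate a genuine obstacle beyond this observation. As a sanity check, both bounds are sharp and genuinely complementary: the near-pencil, with $p$ its point of multiplicity $s-1$, gives equality in $t_2 \ge m$ (and Melchior is tight there), whereas when $m \le s/2$ it is $t_2 \ge s-m$ that supplies $t_2 \ge s/2$.
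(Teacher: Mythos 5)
Your proposal is correct and follows essentially the same route as the paper's own proof: Lemma \ref{AT Lemma} plus the modularity-of-$p$ argument for distinctness gives $t_2 \ge s-m$, and Melchior's inequality \eqref{melchior} with $t_m \ge 1$ gives $t_2 \ge m$. The only (minor) difference is that you explicitly dispose of the case $m \le 3$, where the paper's chain $t_2 \ge 3+(m-3)t_m$ is stated without comment; your version is slightly more careful there, but the approach is identical.
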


\begin{proof}
By Lemma \ref{AT Lemma}, each of the $s-m$ lines in $\mathcal{L}$
not through $p$ contain a point of multiplicity 2.
These points are all distinct since if two different lines not through $p$ shared
a point of multiplicity 2, no other lines in $\mathcal{L}$ could contain that point,
hence no line through $p$ could contain the point, contradicting
modularity of $p$. Thus $t_2\geq s-m$.
On the other hand, by Inequality \eqref{melchior} we have
$t_2\geq 3 + (m-3)t_m\geq 3+(m-3)=m$.
\end{proof}

The preceding result prompts the following question:

\begin{question}\label{QuesOverC}
Does every non-pencil supersolvable complex line arrangement of $s$ lines with a modular
point of multiplicity $m$ satisfy $t_2 \ge \max\{s-m,m\}$?
\end{question}

In the direction of Conjecture \ref{conj1}, we prove the following result. 

\begin{theorem} \label{m=3,4}
Let $\mathcal{L}=\{L_1,\ldots,L_s\}$ be a nontrivial complex line arrangement
(i.e., not a pencil or near pencil).
Assume that every crossing point of $\mathcal{L}$ has
multiplicity equal to 3 or 4. Then the line arrangement $\mathcal{L}$ is not supersolvable.
\end{theorem}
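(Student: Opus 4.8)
The plan is to argue by contradiction. Suppose $\mathcal{L}$ is supersolvable. The hypothesis that every crossing point has multiplicity $3$ or $4$ says exactly that $t_2=0$ and $t_k=0$ for all $k\geq 5$, so the only nonzero entries of the $t$-vector are $t_3$ and $t_4$. Let $m$ denote the largest $k$ with $t_k>0$ (the quantity appearing in \eqref{at}); then $m\in\{3,4\}$, and this coincides with the maximum crossing-point multiplicity, so the split $m\in\{3,4\}$ is exhaustive. Since $\mathcal{L}$ is nontrivial, Hirzebruch's inequality \eqref{hirzebruch} applies, and since we are assuming $\mathcal{L}$ is supersolvable over $\mathbb{C}$ (so $\operatorname{char}=0$), the Anzis--Toh\v{a}neanu inequality \eqref{at} applies. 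The strategy is to feed these two inequalities, together with the combinatorial identity \eqref{comb-id}, into each other and derive numerical contradictions in the two cases.

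First I would dispose of the case $m=3$. Here $t_4=0$, so the number of crossing points is $n=t_3$, and \eqref{comb-id} reads $\binom{s}{2}=3t_3$, giving $t_3=\tfrac{s(s-1)}{6}$. With $t_2=0$ and $t_k=0$ for $k>4$, inequality \eqref{hirzebruch} becomes $\tfrac34 t_3\geq s$, i.e.\ $t_3\geq \tfrac{4s}{3}$; substituting the value of $t_3$ yields $s-1\geq 8$, so $s\geq 9$. On the other hand \eqref{at} with $t_2=0$, $n=t_3$, $m=3$ gives $2t_3\leq 3(s-3)+2=3s-7$, which after substituting $t_3=\tfrac{s(s-1)}{6}$ rearranges to $(s-3)(s-7)\leq 0$, forcing $s\leq 7$. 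These two bounds are incompatible, so $m=3$ cannot occur.

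Next I would treat the case $m=4$. Now \eqref{comb-id} reads $\binom{s}{2}=3t_3+6t_4$, which I would solve for $t_4=\tfrac{s(s-1)}{12}-\tfrac{t_3}{2}$, so that the number of crossing points is $n=t_3+t_4=\tfrac{t_3}{2}+\tfrac{s(s-1)}{12}$. Inequality \eqref{at} with $t_2=0$ and $m=4$ gives $n\leq 2s-7$, hence $\tfrac{t_3}{2}+\tfrac{s(s-1)}{12}\leq 2s-7$. Inequality \eqref{hirzebruch} again gives $t_3\geq \tfrac{4s}{3}$, hence $\tfrac{t_3}{2}\geq \tfrac{2s}{3}$; combining these and clearing denominators produces $s^2-17s+84\leq 0$. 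Since this quadratic has negative discriminant $289-336=-47$, it has no real roots and is strictly positive for every $s$, a contradiction. Thus $m=4$ cannot occur either, and in neither case can $\mathcal{L}$ be supersolvable.

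I do not expect a genuine obstacle here: all three needed ingredients (the combinatorial identity, Hirzebruch, and Anzis--Toh\v{a}neanu) are already available, and the argument is purely a matter of combining them correctly. The only points requiring care are verifying that the hypotheses of \eqref{hirzebruch} (nontriviality) and \eqref{at} (supersolvability in characteristic $0$) are indeed in force under the contradiction hypothesis, and confirming the case split is exhaustive; the rest is the routine algebra sketched above.
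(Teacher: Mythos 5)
Your proposal is correct and follows essentially the same route as the paper's own proof: combining the combinatorial identity \eqref{comb-id}, Hirzebruch's inequality \eqref{hirzebruch}, and the Anzis--Toh\v{a}neanu bound \eqref{at}, splitting into the cases $m=3$ and $m=4$, and reaching the same contradictions, namely $(s-3)(s-7)\le 0$ against a Hirzebruch lower bound on $s$, and $s^2-17s+84\le 0$ with negative discriminant. Your algebra is organized a bit more transparently (solving for $t_3$ and $t_4$ explicitly), but the argument is the same.
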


\begin{proof}

Since $\mathcal{L}$ is not a pencil or a near pencil by
hypothesis, we can apply Inequality \eqref{hirzebruch}.  
In our case, it takes the form:
$\frac{3}{4}t_3\geq s$. 


By \eqref{comb-id}, we have $s(s-1) = 6t_3+12t_4$. 

Suppose that $\mathcal{L}$ is supersolvable. Then, by \eqref{at}, we have 
$t_2\geq 2n-m(s-m)-2$, where $n$ is the total number of crossings and
$m$ is the maximum $k$ such that $t_k>0$.
In our case, this gives $0 \ge 2(t_3+t_4) -m(s-m)-2$, where $m = 3$ or $m=4$. 

First we assume $m=4$ and obtain a contradiction. We have $2(s-4) +1 \ge t_3+t_4$. This implies 
$12(s-4) +6 \ge 6(t_3+t_4) \ge 8s+6t_4$. The last inequality follows
from the Hirzebruch inequality. So we get $6t_4 + 12(s-4)+6 \ge 6t_3+12t_4
= s(s-1)$, where the last equality follows from \eqref{comb-id}.

This, in turn, gives, $12(s-4) +6   \ge 6t_4+8s \ge s(s-1)-12(s-4)-6+8s$. Looking at the
first and third terms in this and rearranging terms, we get
$s^2-17s+84 \le 0$. But since this quadratic in $s$ has positive leading
coefficient and negative discriminant, $s^2-17s+84 > 0$ for
every $s$, giving us the desired contradiction. 

The calculation is similar if $m=3$. By 
\eqref{at}, we get $3(s-3)+2 \ge 2t_3$. Using the 
Hirzebruch inequality \eqref{hirzebruch}, we get $9(s-3)+6 \ge 6t_3 \ge 8s$. This forces
$s\ge 21$. On the other hand, $s(s-1) = 6t_3$ by \eqref{comb-id}. Hence we
obtain $9(s-3)+6 \ge 6t_3 = s(s-1)$, or equivalently, $(s-3)(s-7) \le
0$. So $3 \le s \le 7$. This is not possible. 
\end{proof}

\begin{example}
We do not know many nontrivial examples of complex line arrangements
where every crossing point has multiplicity 3 or 4.
We get two examples by taking the lines defined by the linear factors of
$(x^n-y^n)(x^n-z^n)(y^n-z^n)$ for $n=3$ and $n=4$.
The only other example we know is the one due to Klein \cite{K},
having 21 lines with $t_k=0$ except for $t_3=28$ and $t_4=21$.
\end{example}

\begin{remark}
Let $\mathcal{L}$ be a supersolvable 
line arrangement in ${\mathbb P}_{\mathbb C}^2$ of $s$ lines. Assume that $t_2 =
0$ and let $m$ be the largest multiplicity of a crossing point of
$\mathcal{L}$. By Theorem \ref{m=3,4}, we must have $m \ge 5$. 
Arguing as in the proof of Theorem \ref{m=3,4}, we can show that, for
a fixed $m$, $s$
must be in a specific set of positive integers. For
example, if $m=5$, then our arguments show that $10 \le s \le
13$. Similarly, if $m=100$, then $103 \le s \le 7412$.  But we do not
know of any supersolvable line arrangements in ${\mathbb P}_{\mathbb
  C}^2$ for which $m=100$ and $s \ge 7413$. If there is such a line
arrangement, then our argument shows that it must contain a point
of multiplicity 2. 
\end{remark}

\section{Applications to unexpectedness}\label{appls}

One of the most interesting applications of line arrangements in
$\mathbb{P}^2$ is to finding unexpected curves. More specifically, given a
line arrangement in $\mathbb{P}^2$ one considers the dual arrangement
of points. The question then is whether these points admit an
\textit{unexpected curve}. For more details, see \cite{CHMN}.

The existence of unexpected curves depends on some properties of the
line arrangement. If the arrangement is supersolvable, then
\cite[Theorem 3.17]{DMO} proves that there is
an unexpected curve through the dual points if and only if $s > 2m$,
where $s$ is the number of lines and $m$ is the maximum multiplicity
of a crossing point. We now use this characterization to determine which
supersolvable arrangements in the classification of Section \ref{classification} admit
unexpected curves.  

\subsection{Real line arrangements admitting unexpected curves} First, let us consider a real supersolvable line arrangement
$\mathcal{L}$. 

If $\mathcal{L}$ has exactly one modular point, then the only
arrangement we know which satisfies the condition $s>2m$ is given by
considering a regular $n$-gon for even $n$ and adding the line at infinity. For
more details, see \cite[Theorem 3.15]{DMO}. 

If $\mathcal{L}$ has exactly two modular points, then the only
arrangement which admits an unexpected curve is given by the
following. Let $m\geq 6$ be even and consider an arrangement of $m$ horizontal and $m$ vertical
lines, along with the line at infinity. This is supersolvable with the two modular points of
multiplicity $m+1$ at infinity where the horizontal and vertical lines
meet the line at infinity. Since there are only $s = 2m+1$ lines, this arrangement does not
admit an unexpected curve. But we can add the two diagonals 
(as in Figure \ref{FigSS 3 3}, which shows the case of $m=4$, but in that case
there are three modular points) to this
arrangement without changing the maximum multiplicity while preserving
supersolvability. 
Now the condition $s=2m+3 > 2(m+1)$ is satisfied
and hence the new arrangement admits an
unexpected curve. This arrangement is a
special type of \textit{tic-tac-toe} arrangement described in
\cite[Theorem 3.19]{DMO}. 
The multiplicities of the two modular points (or three when $m=4$) in this tic-tac-toe
arrangement  are equal. There are no other supersolvable arrangements
with exactly two modular points which
admit unexpected curves. 

The only other real supersolvable line arrangement admitting an
unexpected curve is the Fermat arrangement for $n=2$ with three
coordinate axes added. More precisely, this arrangement is defined by 
$xyz(x^2-y^2)(x^2-z^2)(y^2-z^2)=0$.
This has 9 lines and three modular points of multiplicity 4 each
(it is displayed in Figure \ref{FigSS 3 3}). 

In summary, except for possibly more supersolvable arrangements with a
unique modular point, the only real supersolvable line arrangements
which admit an expected curve are listed above. We ask the following
question. 

\begin{question}
Are there any other real supersolvable line arrangements (other than
the one coming from a regular $n$-gon) with exactly
one modular point whose dual points admit an unexpected curve? 
\end{question}

\subsection{Complex line arrangements admitting unexpected curves}
We now consider complex line arrangements. The only examples known to
us of supersolvable arrangements which admit unexpected curves are
obtained by adding two or three coordinate axes to the Fermat
arrangement $\mathcal{F}_n$. In other words, we 
are considering the complex line arrangement given by
$xy(x^n-y^n)(x^n-z^n)(y^n-z^n)$, or $xyz(x^n-y^n)(x^n-z^n)(y^n-z^n)=0$.
 
This has $s=3n+\epsilon$ lines, where $\epsilon=2$ or
$3$ and maximum multiplicity $m=n+2$. Hence the condition $s>2m$ is
satisfied for $\varepsilon =2, n\ge 3$ or $\varepsilon =3, n\ge
2$. In the first case, there is a unique modular point and in the
second case, there are three modular points. 

We end with the following question. 

\begin{question}
Are there any other complex supersolvable line arrangements (different
from the arrangements coming from the Fermat arrangement described above)
whose dual points admit an unexpected curve? 
\end{question}

\end{document}